\documentclass[10pt,amssymb]{amsart}
\pagestyle{plain}

\usepackage{graphicx}
\usepackage{amscd}
\usepackage{amsmath}
\usepackage{amssymb}
\usepackage{amsthm}
\usepackage{amsfonts}
\usepackage{fancyhdr}
\usepackage{mathrsfs}
\usepackage{amsxtra}
\usepackage{mathabx}
\newcommand{\sgn}{\operatorname{sgn}}



\pdfpagewidth 8.5in \pdfpageheight 11in \setlength\topmargin{0in}
\setlength\headheight{0in} \setlength\textheight{8.8in}
\setlength\textwidth{6.5in} \setlength\oddsidemargin{0in}
\setlength\evensidemargin{0in}
\newtheorem{theorem}{Theorem}
\newtheorem{lemma}{Lemma}

\newtheorem{proposition}{Proposition}

\newtheorem{observation}{Observation}
\def\beq{\begin{eqnarray*}}
\def\eeq{\end{eqnarray*}}
\def\bt{\begin{theorem}}
\def\et{\end{theorem}}
\def\bp{\begin{proposition}}
\def\ep{\end{proposition}}
\def\bl{\begin{lemma}}
\def\el{\end{lemma}}
\def\te{{\tilde{S}}}
\title{
 Uniform boundedness of the derivatives of meromorphic inner functions on the real line}
\author{Rishika Rupam}
\address{Texas A\&M University
\\ Department of Mathematics\\
College Station, TX 77843, USA}
\email{rishika@math.tamu.edu}
\begin{document}
\maketitle
\begin{abstract}
Inner functions are an important and popular object of study in the field of complex function theory. We look at meromorphic inner functions with a given spectrum and provide sufficient conditions for them to have uniformly bounded derivative on the real line. This question was first studied by Louis de Branges in 1968 and was later revived by Anton Baranov in 2011. 
\end{abstract}
\section{Introduction}

 An inner function on the upper half plane $\mathbb C_+$ is a bounded analytic function on $\mathbb C_+$ with unit modulus almost everywhere on the real line $\mathbb R$. A meromorphic inner function (MIF) on $\mathbb C_+$ is an inner function on $\mathbb C_+$ with a meromorphic continuation to $\mathbb C$. The spectrum of an MIF $\Theta$ is the level set $\{x \in \mathbb R: \Theta$ $(x)$ $= 1 \}$ and we denote it by $\sigma(\Theta)$. Inner functions arise often in the study of complex function theory. A rather well studied object is the Weyl-Titchmarsh inner function that frequently occurs in the study of the spectral theory of differential operators. We are interested in the following problem - Given a \textit{separated} sequence $\{a_n\}$ on $\mathbb R$, does there exist an MIF $\Theta$ with $\{a_n\}$ as spectrum, such that $|\Theta'|$ is uniformly bounded on $\mathbb R$? By a separated sequence $\{a_n\}$, we simply mean that there is a $\delta >0$ such that $|a_n -a_m| > \delta$, for all $n \neq m $ integers. 
 
 In his book 'Hilbert spaces of entire functions' \cite{DB}, Louis de Branges formulated a result (Lemma 16) that was equivalent to the the statement, 'Given any sequence of separated points $\{a_n\}$ on $\mathbb R$, there exists a meromorphic inner function, $\Theta$ such that $|\Theta'|$ is uniformly bounded on $\mathbb R$ and  $\sigma(\Theta)=\{a_n\}$.' In 2011, Anton Baranov discovered this statement to be false and demonstrated this in private communications with mathematicians working in this area \cite{BAR}. He noticed that any meromorphic inner function having the natural numbers $\mathbb N$  as spectrum must indeed have unbounded derivative on $\mathbb R$. In fact, he formulated a more general result which could be loosely stated as -- any MIF that has as spectrum- clusters followed by gaps must necessarily have unbounded derivative on $\mathbb R$. 
In this paper, we will characterize sequences for which there do exist corresponding MIFs with bounded derivatives, as well as describe the method used by Baranov to contruct counterexamples.

Before proceeding any further, we must clarify de Branges motivation for his result as well as its application. Lemma 16 that de Branges stated was used to show the existence of a non-zero measure $\mu$ that is supported on a sequence $\Lambda$, such that its Fourier transform $\hat\mu$ vanishes on an interval of positive measure. Readers may recognize this as Beurling's \textit{gap problem} for sequences, wherein he asks the question - under what conditions on the sequence $\Lambda$, does there exist a corresponding measure $\mu$ with $\hat\mu$ vanishing on an interval of positive measure?
In \cite{POLYA}, Mitkovski and Poltoratski provided a sufficient condition for Beurling's problem for separated sequences, in terms of the Beurling Malliavin density. We notice in hindsight that de Branges was specifically looking at sequences that were \textit{regular} and that these sequences satisfy the requirement as stated in \cite{POLYA}. Thus, despite the erroneous lemma, de Branges application of it still holds.  We describe this briefly in the applications below. We remark that even for such special sequences, however, there may not exist any corresponding MIF with a bounded derivative. This and other such counterexamples were constructed by Baranov and we decribe these in the last section. 

Apart from this, there have been demands for meromorphic inner functions with a certain spectrum and a bounded derivative in more general contexts. For instance, in the Beurling Malliavin theory for Toeplitz kernels, Makarov and Poltoratski require this to prove the most general form of the BM multiplier theorem \cite{MIF} (see application 2 below). In \cite{POLYA}, Mitkovski and Poltoratski have characterized P\'olya sequences and gap conditions using the existence of an MIF with bounded derivative. In his paper \cite{ABAR} on the stability of completeness of a system of exponentials under certain perturbations, Baranov requires the existence of such an MIF with spectrum as the perturbed sequence. He also states a sufficient condition (lemma 5.2) on sequences to possess this special MIF. We will exploit this as well as a new sufficient condition to describe sequences that are spectra for MIFs with bounded derivatives. We also prove a partial converse result. 
\subsection*{Acknowledgements} The author would like to thank Anton Baranov for going through this work in great detail and suggesting crucial corrections and improvements; to Mikhail Sodin and Mishko Mitkovski for extremely useful feedback; and finally her adviser Alexei Poltoratski for suggesting this problem and for innumerable discussions and good advice. 

\subsection{Preliminaries}A well known theorem by Riesz and V.I. Smirnov says that all meromorphic inner functions $\Theta$ have the form, \beq \Theta(z)= B_\Lambda (z) e^{iaz} \eeq
where $a\geq 0$ and $B_{\Lambda}$ is the Blaschke product of the zeros of the function given by $\Lambda = \{\lambda_n\}_n$, where $|\lambda_n| \rightarrow \infty$ and satisfy the convergence criterion,
\beq \sum_{\lambda_n \in \Lambda}  \frac{\Im \lambda_n}{1+ |\lambda_n|^2} < \infty. \eeq
These functions enjoy the antisymmetric relationship,
\begin{eqnarray} \label{antisymm}\Theta(z) = \frac{1}{\overline{\Theta({\overline{z}})}}, \end{eqnarray}
which can be obtained by Weierstrass's factorization theorem, as given in \cite{HAVIN}. Any meromorphic inner function $\Theta$ can be represented as $\Theta=e^{i\phi}$ on $\mathbb R$, where $\phi$ is an increasing real analytic function on $\mathbb R$. This is not hard to see this for $e^{iaz}$, $(a\geq 0)$ and for finite Blaschke products, which we can then generalize to infinite ones. 

 It is easy to construct a meromorphic inner function with a given spectrum. Although the procedure is standard in existing literature \cite{SAR}, we go over it again as the construction is crucial to our main proofs. 
\\ \indent
  Let $\displaystyle \{a_n\}_{-\infty}^{\infty}$ be a separated sequence on $\mathbb{R}$ (we proceed similarly for one-sided sequences also). Let $\mu$ be a Poisson finite, positive measure on $\mathbb {R}$ with point masses at the $a_n$, i.e., 
\begin{eqnarray} 
       \label{clark} \mu=\displaystyle \sum_{n=-\infty}^{\infty} w_n \delta_{a_n} 
\end{eqnarray}       
for some $w_n$ $>0$ such that $\displaystyle\sum_{n=-\infty}^{\infty} \frac {w_n}{1+ a_n^2} < \infty$.The Cauchy transform of a Poisson finite measure $\nu$ on $\mathbb R$ is given by
\beq K\nu (z) = \frac{1}{\pi i}\displaystyle\int_{\mathbb R} \bigg(\frac{1}{t-z} - \frac{t}{1+t^2}\bigg) d\nu(t). \eeq Applying the Cauchy transform to the measure $\mu$ just defined,  \begin{eqnarray}
     \nonumber K\mu(z) = \frac{1}{\pi i}\displaystyle \sum_{n=-\infty}^{\infty} \frac{w_n}{a_n-z}  - \frac{w_n a_n}{1+a_n^2},
    \end{eqnarray}
we have that $K\mu$ is an analytic funtion from the upper half plane $\mathbb{C_+}$ to the right half plane. \newline
We compose $K\mu$ with a fractional linear transformation that maps the right half plane into the unit disk to get $\Theta$ :$\mathbb C_+$ $\rightarrow$ $\mathbb D$ as follows,
\begin{eqnarray}
 \label{theta} \Theta(z) = \frac{K\mu(z)-1}{K\mu(z) +1}. 
\end{eqnarray}

Observe that $\Theta$ is a meromorphic inner function on $\mathbb{C_+}$, with spectrum the set, 
$\{a_n\}_{-\infty}^{\infty}$;  For 
$\mu$ is non negative, giving us $\Re K\mu(z) > 0$ on $\mathbb C_+$, with $K\mu(x) \in \mathbb R$ for all $x \in \mathbb R$, along with the fact that 
\begin{eqnarray}
 \nonumber w \rightarrow \frac{w-1}{w+1}
 \end{eqnarray}
 maps $\{\Re w>0 \}$ onto $\mathbb D$, taking $i\mathbb R$ onto the unit circle. Morever, we notice that $\Theta$ would take the value $1$ exactly at the singularities of $K\mu$, i.e. at the $a_n$s. \\ 
 \indent
 The measure $\mu$ is known as the Clark measure associated with the function $\Theta$. By a reversal of steps and using Herglotz's theorem one can construct a Clark measure given any meromorphic inner function on $\mathbb C_+$. In particular, Clark measures associated with inner functions are singular with respect to the Lebesgue measure, with the spectrum of the inner function as its support except, possibly, the point at infinity. A natural question to ask is if the inner function with spectrum $\{a_n\}$ is unique. A look at (\ref{clark}) assures us that that is quite not the case, for the $w_n$ are almost arbitrarily chosen. We can obtain restrictions on the $w_n$ by imposing additional conditions on the function. Here we ask for boundedness of the derivative on $\mathbb R$. 
 
 \subsection{Applications} Let us see how useful it is to have an MIF with a bounded derivative on $\mathbb R$ with two applications. 
   \begin{enumerate}
  \item  We describe the sufficient condition for the gap problem, as described in \cite{POLYA}- If $D_*({\Lambda})>0$, then there does exist a nonzero measure $\mu$, supported on $\Lambda$ such that $\hat{\mu}$ vanishes on an interval of positive length. Here $D_*(\Lambda)$ refers to the interior Beurling Malliavin density of $\Lambda$. There are several equivalent definitions of $D_*$. To understand the most relevant definitions here, let us clarify a few things. Given a separated sequence $\Lambda $ on $\mathbb R$, it's counting funtion $n_\Lambda$ is the step function that jumps by $1$ unit at each point in $\Lambda$ and is $0$ at $0$. 
    For $a>0$, a sequence $\Lambda$ is said to be \textit{a-regular} if \begin{equation*} \int_{\mathbb R} \frac{|n_{\Lambda}(x)-ax|}{1+x^2}dx < \infty.\end{equation*} 
        We now define the interior BM density as follows.
  \begin{equation*}D_*(\Lambda) = \sup\{a|\exists \hspace{5 pt}\mbox{an}\hspace{5 pt} a-regular \hspace{5 pt} \mbox{subsequence} \hspace{2 pt}\Lambda' \subset \Lambda  \}.\end{equation*} 
Using this definition, it is easy to see that \textit{a-regular} sequences have  interior density equal to $a>0$. Such sequences are also \textit{P\'{o}lya}, i.e., it has the following property- if there is an entire function $f$ of zero exponential type that is bounded on this sequence, then $f$ must be a constant. This was proved by de Branges in \cite{DB} and it's connection with the gap problem is described in \cite{POLYA}. Thus, for regular sequences, the gap condition holds.

The proof of the gap condition is much simpler in the case the there is an MIF with $\Lambda$ as spectrum and bounded derivative on $\mathbb R$. 

 Let us explain briefly de Branges motivation for having an MIF with bounded derivative. We refer the reader to  \cite{DB} for definitions and results. Consider an entire function $E$, satisfying the inequality
 \begin{equation*}|E(z)|>|E(\bar{z})|, \hspace{12 pt} \mbox{for } z \in \mathbb C_+.\end{equation*} Such functions are usually called de Branges functions. The de Branges space $B_E$ associated with $E$ is the space of entire functions $F$ such that $F(z)/E(z)$ and $F(z)/E^{\#}(z)$ are in $\mathcal H^2(\mathbb C_+)$, the Hardy space on $\mathbb C_+$. Here $E^{\#}(z):=\overline{E(\bar{z})}$. Every de Branges function $E$ gives rise to a meromorphic inner function $\Theta$ as follows.
 \begin{equation*} \Theta(z) := E^{\#}(z)/E(z). \end{equation*}
 Conversely, every MIF gives rise to a de Branges function (see, for instance, \cite{HAVIN}) . Thus, every MIF corresponds to a de Branges space of entire functions. Let $\Theta(t) = e^{i\phi(t)}$ on $\mathbb R$. The phase function of $E(z)$ is defined as $-\frac{1}{2} \phi(t)$. Now, we recall our discussion in the introduction, wherein we mentioned that de Branges required the gap condition to hold for a certain \textit{regular} sequence, i.e., he required the existence of a measure $\mu$ that was supported on this regular sequence such that it's Fourier transform $\hat\mu$ vanished on an interval of positive length. 
The crucial result that he used was the existence of a certain de Branges space of entire functions on $\mathbb C_+$ with some conditions on the mean type of the space. He was able to do this using the fact that $|\phi'|$ is uniformly bounded on $\mathbb R$. We refer the reader to 'Theorem 65' in \cite{DB} for details.

\item We will refer to \cite{MIF} for this application.
We use the standard notation $\mathcal{H}^p(=\mathcal{H}^p(\mathbb C_+))$ to denote Hardy spaces and $\mathcal{N^+}= \{G/H : G,H \in \mathcal{H}^{\infty}, H \hspace{0.05 in} \mbox{is outer}\}$ to denote the Smirnov-Nevanlinna class in $\mathbb C_+$. Given an inner function $\Theta$, we define model spaces in the Smirnov class and general Hardy spaces to be the spaces
\begin{eqnarray*}
K^+[\Theta]&=& \{F\in \mathcal N^+ \cap C^{\omega}(\mathbb R): \Theta\bar{F} \in \mathcal N^+ \},\\
K^p[\Theta]&=& K^+[\Theta] \cap L^p(\mathbb R).
\end{eqnarray*}
We recall that to every $U \in L^{\infty}(\mathbb R)$, there corresponds the Toeplitz operator $T_U: \mathcal H^2 \rightarrow \mathcal H^2$ defined as 
\begin{equation*} T_U(f) = P_+(Uf),\end{equation*}
where $P_+$ is the projection onto $\mathcal H^2$. The \textit{Toeplitz kernel} is defined as \begin{equation*} N[U]=\ker T_U.\end{equation*}
As with model spaces, we can also define the Toeplitz kernels in the Smirnov class and Hardy spaces as
\begin{eqnarray*}
N^+[U]&=& \{F \in \mathcal N^+ \cap L^1_{loc}(\mathbb R): \overline{UF} \in \mathcal N^+\}\\
N^p[U]&=&N^+[U]\cap L^p(\mathbb R), \hspace{0.2 in} (0<p\leq \infty).\end{eqnarray*}
As described in \cite{MIF}, these kernels play a crucial role in answering questions in the inverse spectral theory of differential operators and completeness problems of families of functions, among others. Let us consider one such problem.
Let $\Phi = e^{i\phi}$ be a meromorphic inner function and let $\Lambda \subset \mathbb R$. We say that $\Lambda$ is a \textit{defining set} for $\Phi$ if for any other meromorphic inner function $\tilde{\Phi}=e^{i\tilde{\phi}}$,
\begin{equation*} \tilde{\phi}=\phi \hspace{0.1 in} \mbox{on $\Lambda$} \hspace{0.1 in} \implies \Phi \equiv \tilde{\Phi}.\end{equation*}
It has been described in \cite{MIF} that a sufficient condition for $\Lambda$ to be a defining set for $\Phi$ is for $\Lambda$ to be a uniqueness set for $K^\infty[\Phi^2]$. This condition translates to one about Toeplitz kernels.
\begin{theorem}
$\Lambda$ is a uniqueness set for $K^{\infty}[\Theta]$ if and only if for every meromorphic inner function $J$ such that $\sigma(J)=\Lambda$, we have that $N^{\infty} [\overline{\Theta}J]={0}.$
\end{theorem}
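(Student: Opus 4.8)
The plan is to prove the contrapositive equivalence: $\Lambda$ fails to be a uniqueness set for $K^\infty[\Theta]$ if and only if there exists an MIF $J$ with $\sigma(J)=\Lambda$ admitting a nonzero element of $N^\infty[\bar\Theta J]$. The starting observation is that the model space is itself a Toeplitz kernel: unwinding the definitions, $F\in K^\infty[\Theta]$ exactly when $F\in\mathcal N^+\cap L^\infty(\mathbb R)$ and $\Theta\bar F\in\mathcal N^+$, that is, $K^\infty[\Theta]=N^\infty[\bar\Theta]$, the real-analyticity on $\mathbb R$ being automatic from the meromorphic continuation of $\Theta$ (on $\mathbb R$ one has $F=\Theta\,\overline{\Theta\bar F}$). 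By Smirnov's theorem I will identify $\mathcal N^+\cap L^\infty(\mathbb R)$ with $\mathcal H^\infty$ throughout. I will also record the one-line fact that multiplication by the inner function $J$ preserves $\mathcal N^+$, which yields the inclusion $N^\infty[\bar\Theta J]\subseteq N^\infty[\bar\Theta]=K^\infty[\Theta]$.

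For the easy direction, suppose $J$ is an MIF with $\sigma(J)=\Lambda$ and $G\in N^\infty[\bar\Theta J]$ is nonzero. Then $\Theta\bar J\bar G\in\mathcal N^+$, and multiplying by the inner $J$ gives $\Theta\bar G=J(\Theta\bar J\bar G)\in\mathcal N^+$, so $G\in K^\infty[\Theta]$ as well. Setting $F=(1-J)G$, both $\Theta\bar G$ and $\Theta\bar J\bar G$ lie in $\mathcal N^+$, hence so does $\Theta\bar F=\Theta\bar G-\Theta\bar J\bar G$; thus $F\in K^\infty[\Theta]$. Since $J\equiv 1$ on $\sigma(J)=\Lambda$ and $G$ is bounded there, $F$ vanishes on $\Lambda$, while $F\not\equiv 0$ because $G\not\equiv 0$ and $1-J\not\equiv 0$. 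Hence $\Lambda$ is not a uniqueness set.

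For the converse I must manufacture $J$ from a witness of non-uniqueness, and this is where I expect the real work. Start from a nonzero $F\in K^\infty[\Theta]=\mathcal H^\infty$ vanishing on $\Lambda$. I will produce $J$ by the Clark-measure recipe of the Preliminaries: choose weights $w_n>0$, form $\mu=\sum_n w_n\delta_{a_n}$, and set $J=(K\mu-1)/(K\mu+1)$, so that $\sigma(J)=\Lambda$ and $1-J=2/(K\mu+1)$. The candidate witness is $G:=F/(1-J)=\tfrac12 F(K\mu+1)$. A short computation on $\mathbb R$ (using $\bar J=1/J$) gives $\Theta\bar J\bar G=-\tfrac12(\Theta\bar F)(K\mu+1)$, which lies in $\mathcal N^+$ because $\Theta\bar F\in\mathcal H^\infty$ and $K\mu$, having positive real part on $\mathbb C_+$, lies in $\mathcal N^+$; likewise $G\in\mathcal N^+$ and $G\not\equiv 0$ (as $K\mu+1$ never vanishes). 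Thus $G\in N^\infty[\bar\Theta J]$ will follow once I verify $G\in L^\infty(\mathbb R)$.

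Securing this boundedness is the main obstacle. Near each $a_n$ the function $K\mu$ has a simple pole of residue $iw_n/\pi$ while $F$ has a zero, so $G(x)\to\tfrac{i}{2\pi}w_nF'(a_n)$; choosing $w_n$ comparable to $1/(1+|F'(a_n)|)$ makes these local limits uniformly bounded and removes the singularities of $G$. The delicate part is controlling $F(x)(K\mu(x)+1)$ globally, both in the gaps between the points and as $x\to\infty$, while simultaneously guaranteeing the convergence $\sum_n w_n/(1+a_n^2)<\infty$ and $\sigma(J)=\Lambda$ exactly; here I expect to invoke the separation of $\Lambda$ and the membership $F\in\mathcal H^\infty$ to estimate the tail of the Cauchy sum uniformly. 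Once $G\in L^\infty(\mathbb R)$ is established, $G$ is a nonzero element of $N^\infty[\bar\Theta J]$, which completes the converse and hence the theorem.
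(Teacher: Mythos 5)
A point of order first: the paper does not prove this theorem at all — it is quoted from \cite{MIF} as background for the second application — so there is no in-paper proof to compare against; your proposal has to be judged on its own merits against the standard argument. Your easy direction is that standard argument and is complete: given $0\neq G\in N^\infty[\bar\Theta J]$, the function $F=(1-J)G$ lies in $K^\infty[\Theta]$ (since $\Theta\bar G=J\,(\Theta\bar J\bar G)\in\mathcal N^+$ and $\mathcal N^+$ is an algebra), vanishes on $\Lambda=\sigma(J)$, and is nonzero; the real-analyticity on $\mathbb R$ that the paper's definition of $K^+[\Theta]$ demands follows from the reflection identity you indicate, since $\Theta$ and $J$ continue meromorphically across $\mathbb R$. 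Your quantifier bookkeeping in passing to the contrapositive is also correct.

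The converse is where the proposal stops short, and the gap is not merely cosmetic. The weight choice $w_n\asymp 1/(1+|F'(a_n)|)$ controls only the limits of $G=\frac12 F(K\mu+1)$ \emph{at} the points $a_n$; it controls neither (i) the quotient $|F(x)|/|x-a_n|$ on a neighborhood of $a_n$, for which you need $M_n:=\sup_{|t-a_n|\le\delta/2}|F'(t)|$ rather than the single value $|F'(a_n)|$, nor (ii) the off-diagonal part of the sum, which for non-decaying weights is genuinely unbounded in general: the paper's own Lemma \ref{lem} shows that with $w_n\asymp\Delta_n$ this part can grow like $\ln|a_k|$, and nothing forces $|F|$ to decay to compensate. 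Both defects are repaired by making the weights summably small as well as adapted to $F'$: take, say, $w_n=2^{-|n|}\big(1+M_n\big)^{-1}$, where $\delta$ is the separation constant of $\Lambda$. Then for every $x\in\mathbb R$ the terms of $|F(x)|\,|K\mu(x)+1|$ with $|a_n-x|\ge\delta/2$ contribute at most $\frac{2}{\delta}\|F\|_\infty\sum_n 2^{-|n|}$ plus $\|F\|_\infty\big(1+\frac1\pi\sum_n w_n\big)$ from the compensating terms, while the at most one term with $|a_n-x|<\delta/2$ is bounded by $w_nM_n\le 1$, using the mean value theorem and $F(a_n)=0$. This yields $G\in L^\infty(\mathbb R)$, after which the parts of your argument that are already in place (membership in $\mathcal N^+$ via positivity of $\Re K\mu$, the identity $\Theta\bar J\bar G=-\frac12\Theta\bar F(K\mu+1)$, and nonvanishing of $G$) finish the proof. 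So the architecture of your converse is the right one; what is missing is exactly the estimate you flagged, and the weight prescription as you wrote it would not deliver that estimate without this strengthening.
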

Thus, the (non) triviality of Toeplitz kernels $N^{\infty}[\bar{\Theta}J]$ is useful in answering questions about defining sets. In particular, for MIFs that have a bounded derivative, these questions are easier to answer. Let us refer to the following two theorems,
\begin{theorem}
Suppose $\Theta$ is a tempered inner function. Then for any mermorphic inner function $J$ and any $p>0$,
\begin{equation*}
N^p[\bar\Theta J]\neq 0 \implies \exists n,\hspace{0.1 in} N^{\infty}[\bar{b}^n\bar{\Theta}J]\neq 0.
\end{equation*}
\end{theorem}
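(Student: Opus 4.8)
The plan is to manufacture a bounded element of $N^\infty[\bar b^{n}\bar\Theta J]$ out of a given nonzero element of $N^p[\bar\Theta J]$ by dividing by a suitable power of $(z+i)$; the role of temperedness will be to guarantee that every element of the kernel grows at most polynomially on $\mathbb R$. First I would fix $F\in N^p[\bar\Theta J]$ with $F\not\equiv 0$. Since $F\in\mathcal N^+\cap L^p$, Smirnov's theorem gives $F\in\mathcal H^p(\mathbb C_+)$. Put $G:=\Theta\bar J\bar F$; by definition of the kernel $G\in\mathcal N^+$, and since $|G|=|F|\in L^p$ on $\mathbb R$ we also get $G\in\mathcal H^p$. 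Setting $u:=JF\in\mathcal H^p$, the relation $\bar\Theta u=\bar\Theta JF=\bar G\in\overline{\mathcal H^p}$ shows that $u$ belongs to the model space $K^p_\Theta:=\mathcal H^p\cap\Theta\overline{\mathcal H^p}$, while $|F|=|u|$ on $\mathbb R$; note that $J$ has now been absorbed, so only the phase of $\Theta$ will matter.

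The heart of the matter, and the step I expect to be the main obstacle, is to establish a uniform polynomial bound
\begin{equation*} |F(x)|=|u(x)|\le C(1+|x|)^{M},\qquad x\in\mathbb R, \end{equation*}
with $C,M$ depending only on $\Theta$, $p$ and $\|F\|_{p}$. For $p=2$ this is transparent from the reproducing kernel structure of $K_\Theta=\mathcal H^2\ominus\Theta\mathcal H^2$: the boundary evaluation at a real point $x$ is bounded, with norm $\|k_x\|^2=k_x(x)=\phi'(x)/2\pi$, so that $|u(x)|^2\le\|u\|_2^2\,\phi'(x)/2\pi$; temperedness of $\Theta$ means precisely that $\phi'(x)\le C_1(1+|x|)^{N}$ for some $N$, and the estimate follows with $M=N/2$. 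For general $p>0$ I would invoke the corresponding pointwise bounds for $K^p_\Theta$ in terms of $\phi'$, in the spirit of the embedding and Bernstein-type inequalities of Aleksandrov and Baranov. Making these estimates precise, and in particular handling all $p>0$ uniformly, is where the genuine work lies and where temperedness is indispensable.

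Granting the growth bound, the theorem follows by a direct computation. I would fix an integer $m\ge M$ and set $H:=F/(z+i)^{m}$. Then $H\in\mathcal N^+$ and on $\mathbb R$ one has $|H(x)|=|F(x)|/(1+x^2)^{m/2}\le C'$, so $H\in\mathcal H^\infty$ and $H\not\equiv 0$. Taking $n=m$ and the standard Blaschke factor $b(z)=(z-i)/(z+i)$, the membership $H\in N^\infty[\bar b^{m}\bar\Theta J]$ reduces to checking that $\overline{\bar b^{m}\bar\Theta J\,H}\in\mathcal N^+$. Using $\bar F=\bar\Theta J G$ on $\mathbb R$ and $b^{m}=(z-i)^{m}/(z+i)^{m}$,
\begin{equation*} \overline{\bar b^{m}\bar\Theta J\,H}=b^{m}\Theta\bar J\,\frac{\bar F}{(x-i)^{m}}=b^{m}\,\frac{G}{(x-i)^{m}}=\frac{G}{(z+i)^{m}}, \end{equation*}
which lies in $\mathcal N^+$ because $G\in\mathcal N^+$ and $(z+i)^{-m}\in\mathcal H^\infty$. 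Hence $H$ is a nonzero element of $N^\infty[\bar b^{m}\bar\Theta J]$, establishing the claim with $n=m$.
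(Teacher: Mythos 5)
First, a point of context: the paper does not prove this statement at all. It appears in the Applications subsection as a result quoted from Makarov--Poltoratski \cite{MIF} to motivate the usefulness of MIFs with bounded derivative, so there is no proof in the paper to compare yours against; your proposal has to stand on its own. Much of it does. The reduction is correct and is the natural one: Smirnov's maximum principle gives $F\in\mathcal H^p$, the kernel condition gives $G:=\Theta\bar J\bar F\in\mathcal N^+\cap L^p\subset\mathcal H^p$, and $u:=JF$ lies in $K^p_\Theta=\mathcal H^p\cap\Theta\overline{\mathcal H^p}$ with $|u|=|F|$ on $\mathbb R$. The closing step is also correct: granting $|F(x)|\le C(1+|x|)^M$ on $\mathbb R$, the function $H=F/(z+i)^m$ with $m\ge M$ is a nonzero element of $\mathcal H^\infty$, and your computation $\overline{\bar b^{m}\bar\Theta J H}=G/(z+i)^{m}\in\mathcal N^+$ verifies membership in $N^\infty[\bar b^m\bar\Theta J]$.

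The genuine gap is exactly where you flag it: the pointwise polynomial bound for \emph{all} $p>0$. Your reproducing-kernel argument, $|u(x)|^2\le\|u\|_2^2\,\phi'(x)/2\pi$, is specific to $p=2$. For $1<p<\infty$ one can still argue by duality, since $\|k_x\|_{q}\lesssim(1+|x|)^{N/p}$ with $1/p+1/q=1$; but for $p\le 1$ duality fails outright, and the Aleksandrov--Baranov embedding results you invoke are precisely the content that a proof of this theorem must supply. Since the theorem is asserted for any $p>0$ --- and since in the intended applications one really does start from $N^p$ with small $p$ or from $N^+$ --- the proof as written is incomplete at its central step.

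The gap can be closed by an elementary argument that is uniform in $p$ and stays in the spirit of what you did. Temperedness forces the zeros $z_n=x_n+iy_n$ of $\Theta$ to stay polynomially far from $\mathbb R$: from $\phi'(x)\ge y_n/((x-x_n)^2+y_n^2)$ one gets, by summing the Blaschke terms, that $|\Theta(x+iy)|\ge 1/2$ whenever $0<y\le c\,(1+\phi'(x))^{-1}$. Hence on a disk $D(x_0,r)$ with $r\asymp(1+|x_0|)^{-N}$ the pseudocontinuation of $u=\Theta\bar G$ across $\mathbb R$ (equal to $\Theta(z)\overline{G(\bar z)}$ in $\mathbb C_-$) is analytic, with $|u(x-iy)|\le 2|G(x+iy)|$ there. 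Now use subharmonicity of $|u|^p$, which is valid for every $p>0$, together with the uniform line estimates $\int_{\mathbb R}|u(x+iy)|^p\,dx\le\|u\|_p^p$ and $\int_{\mathbb R}|G(x+iy)|^p\,dx\le\|u\|_p^p$:
\[
|u(x_0)|^p\le\frac{1}{\pi r^2}\int_{D(x_0,r)}|u|^p\,dA\lesssim\frac{\|u\|_p^p}{r}\asymp\|u\|_p^p\,(1+|x_0|)^N,
\]
so that $|u(x_0)|\lesssim\|u\|_p(1+|x_0|)^{N/p}$. This is the bound you need, with $M=N/p$, for every $p>0$; inserting it makes your proof complete.
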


By $\Theta$ being \textit{tempered} one simply means that $\Theta'$ has at most polynomial growth at $\pm \infty$, i.e., $\exists N, \Theta'(x) = O(|x|^N)$, $x\rightarrow \infty$. Here, the $n$ we obtain in the theorem is the same as $N$, regarding the growth of $|\Theta'|$. Thus, in the case of bounded derivative $|\Theta'|$, we have that \begin{equation*}
N^p[\bar\Theta J]\neq 0 \implies  N^{\infty}[\bar{\Theta}J]\neq 0.
\end{equation*}
The next theorem comes under the heading of Beurling Malliavin multiplier theorem.
\begin{theorem}
Suppose $\Theta$ is a meromorphic inner function satisfying $|\Theta'| \leq const$. Then, for any meromorphic inner function $J$, we have 
\begin{equation*}N^+[\bar{\Theta}J]\neq 0 \implies \forall \epsilon, N^{\infty}[\bar{S}^{\epsilon}\bar{\Theta}J]\neq 0.\end{equation*}
\end{theorem}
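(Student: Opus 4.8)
The plan is to deduce the bounded twisted-kernel element from the given Smirnov one by a Beurling--Malliavin multiplier argument, the admissibility of the multiplier being exactly what the hypothesis $|\Theta'|\leq \mathrm{const}$ supplies. First I would unwind the two kernel conditions and isolate the real difficulty. Write $\Theta=e^{i\phi}$ and $J=e^{i\psi}$, and fix a nonzero $F\in N^+[\bar\Theta J]$; by definition $F\in\mathcal N^+$ and $h:=\overline{(\bar\Theta J)F}=\Theta\bar J\bar F\in\mathcal N^+$. Two observations organize everything. Since $|\Theta\bar J|=1$ on $\mathbb R$ we have $|F|=|h|$ on $\mathbb R$, so the only obstruction to $F$ being bounded is the growth of the weight $w:=\log^+|F|$; and because $F\in\mathcal N^+$ this weight obeys
\[ \int_{\mathbb R}\frac{w(x)}{1+x^2}\,dx<\infty, \]
the finiteness of the logarithmic integral that every majorant theorem requires as input. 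Moreover $F$ already lies in $N^+[\bar S^\epsilon\bar\Theta J]$, since $\overline{(\bar S^\epsilon\bar\Theta J)F}=S^\epsilon h\in\mathcal N^+$ (as $S^\epsilon\in\mathcal H^\infty$). Thus the entire content of the theorem is the upgrade of a possibly unbounded Smirnov solution to a bounded one, purchased by lowering the exponential type of the symbol by $\epsilon$.

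The engine is the Beurling--Malliavin multiplier theorem: given a majorant $w\geq 0$ with finite logarithmic integral and sufficient regularity, for every $\epsilon>0$ there is a nonzero function of exponential type at most $\epsilon$ dominated by $e^{-w}$ on $\mathbb R$. The role of $|\Theta'|\leq \mathrm{const}$ is to supply that regularity: on $\mathbb R$ one has $|\Theta'|=\phi'$, so the hypothesis is equivalent to $\phi'$ being bounded, whence the phase $\phi$ is Lipschitz and the spectrum $\sigma(\Theta)$ is uniformly separated. This pins down the admissible growth of any solution of the Toeplitz equation and should force $w=\log^+|F|$ to vary slowly enough (a Lipschitz-type, almost-decreasing majorant condition) to be a legitimate Beurling--Malliavin weight. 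I would make this quantitative by exploiting the relation $\bar F = \bar\Theta J h$ to compare $|F|$ with $\phi$, estimating the growth of $F$ across the gaps of the separated spectrum.

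With the multiplier in hand I would assemble the bounded kernel element, and here lies the main obstacle. The Toeplitz relation couples $\mathcal N^+$ with its conjugate class, so one cannot merely multiply $F$ by a bounded outer function and remain in the kernel: a genuinely analytic factor destroys membership, while a co-analytic one destroys the Smirnov property. This is precisely why the type must drop. A multiplier $m$ of exponential type at most $\epsilon$ that is bounded on $\mathbb R$ is simultaneously \emph{almost analytic} and \emph{almost co-analytic}, and the inner factor $S^\epsilon=e^{i\epsilon z}$ is what reconciles the two half-planes: by a Phragm\'en--Lindel\"of indicator computation, $e^{i\epsilon z}$ times a type-$\leq\epsilon$ function bounded on $\mathbb R$ has nonpositive indicator in $\mathbb C_+$ and hence lies in $\mathcal H^\infty$. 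I would therefore combine $F$ (equivalently $h$), the multiplier, and the factor $S^\epsilon$, and verify by a mean-type/indicator bookkeeping that the resulting $G$ lies in $\mathcal N^+\cap L^\infty$ and satisfies $\overline{(\bar S^\epsilon\bar\Theta J)G}\in\mathcal N^+$, yielding $N^\infty[\bar S^\epsilon\bar\Theta J]\neq 0$. The genuinely hard part is this reconciliation, together with the verification that bounded derivative really produces a Beurling--Malliavin--admissible weight; once those are settled, the remaining manipulations are routine bounded-type estimates.
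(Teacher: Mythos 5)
First, a point of order: the paper you are being compared against does not prove this statement at all. It appears in the introductory \emph{Applications} subsection, quoted from Makarov--Poltoratski \cite{MIF} (it is their Beurling--Malliavin multiplier theorem in Toeplitz form) purely as motivation for wanting inner functions with bounded derivative. So your proposal can only be measured against the argument in \cite{MIF}, and against that standard it has the right skeleton but a hole at its center. The parts you call routine are indeed routine: $\log^+|F|$ has finite logarithmic integral because $F\in\mathcal N^+$; $F$ itself already lies in $N^+[\bar S^\epsilon\bar\Theta J]$; and once one has an entire multiplier $m$ of exponential type at most $\epsilon/2$ with $|m|\leq\min\{1,1/|F|\}$ on $\mathbb R$, the assembly goes exactly as you sketch: writing $m^\#(z):=\overline{m(\bar z)}$, Phragm\'en--Lindel\"of gives $mS^{\epsilon/2},\,m^\#S^{\epsilon/2}\in\mathcal H^\infty$, and $H:=FmS^{\epsilon/2}$ is a nonzero element of $N^\infty[\bar S^\epsilon\bar\Theta J]$, since on $\mathbb R$ one has
\begin{equation*}
\overline{\bar S^\epsilon\bar\Theta J\,H}=\bigl(\Theta\bar J\bar F\bigr)\,m^\# S^{\epsilon/2}\in\mathcal N^+,
\end{equation*}
with modulus $|F||m|\leq 1$, so both $H$ and this conjugate expression are in $\mathcal H^\infty$ by the Smirnov maximum principle.

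The genuine gap is the step you yourself flag with ``should force'': the claim that $|\Theta'|\leq\mathrm{const}$ makes $w=\log^+|F|$ a Beurling--Malliavin admissible (Lipschitz-type) weight. The hypothesis constrains $\Theta$, not $F$: $F$ is an arbitrary nonzero kernel element, and nothing prevents $|F|$ from oscillating badly on $\mathbb R$ no matter how tame $\Theta$ is. Nor can one repair this by passing to the Lipschitz regularization $\sup_t\bigl(w(t)-|x-t|\bigr)$, because finiteness of the logarithmic integral is not preserved under such regularization --- that failure is precisely the hard core of Beurling--Malliavin theory, not a technicality. The actual mechanism is different and lives on the \emph{phase} side, not the modulus side. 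Since the outer part of $F$ remains in the kernel, one may take $F$ outer; then the kernel condition forces $\Theta\bar J=cIF/\bar F$ for some inner $I=e^{i\varphi}$ and unimodular $c$, i.e.\ the phase equation $\phi-\psi=\varphi+2\widetilde{\log|F|}+\mathrm{const}$, where $\widetilde{\ }$ denotes the harmonic conjugate. Differentiating and using $\psi'\geq 0$, $\varphi'\geq 0$, the hypothesis $0\leq\phi'\leq C$ yields $2\bigl(\widetilde{\log|F|}\bigr)'\leq C$: a one-sided derivative bound on the \emph{conjugate} of $\log|F|$, not Lipschitz control of $\log^+|F|$ itself. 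Converting exactly this kind of one-sided conjugate regularity into a small-type multiplier is the substance of the Beurling--Malliavin theorem as reworked in \cite{MIF}; it is the whole theorem, not a lemma one can defer. As written, your proposal aims the regularity at the wrong object and assumes the hard implication.
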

The space $N^+$ contains $N^{\infty}$  and hence it is easier to construct functions in these spaces. Similarly, it is often easier to work with $N^2$, which is a subspace of the Hardy space $\mathcal H^2$. Using the above theorems, we can just restrict our attention to these larger spaces in case $\Theta$ has a bounded derivative. As mentioned in \cite{MIF}, in the case of a general bounded $\gamma$, where $\bar{\Theta}J=e^{i\gamma}$, we cannot multiply down to $H^{\infty}$, elements of $N[U](=N^2[U])$ even by using factors like $\bar{S}$. 

Thus, an MIF having a bounded derivative is an extremely useful object.
\end{enumerate}
 \section{Main results}
 We'll denote the gaps between the successive $a_n$s as, \begin{eqnarray}
 \label{delta} \Delta_n := \begin{cases}
                        &a_{n+1} -a_n \hspace{0.3 in}\forall n>0 \\
                        &a_n - a_{n-1} \hspace{0.3 in}\forall n\leq 0.
                        \end{cases}
 \end{eqnarray} 
 In their paper \cite{BM3}, Makarov and Poltoratski have proved the existence of the required inner function when the $\Delta_n$  are uniformly bounded.

 Our approach in this paper will be to consider sequences characterized by the growth of their gaps. We start with gaps that are increasing, but very slowly. Formally, the gaps obey the relation
 \beq \frac{\ln |a_n|}{\ln \ln \Delta_n} \lesssim \Delta_n \lesssim \ln |a_n|.\eeq
 Here, and throughout the paper, $f(n) \asymp g(n)$ will denote the existence of constants $c_1, c_2 >0$ such that $ c_1 f(n) \leq g(n) \leq c_2 f(n)$ for large enough $n$. And $f(n) \lesssim g(n)$ will mean $f(n) \leq c g(n)$ for some $c \geq 0$ and large enough $n$. 
 
 It turns out that this case is a generalization of the result proved in \cite{BM3}. 
     \begin{lemma}\label{small}
    If $\left\{a_n\right\}$ is a sequence in $\mathbb R$ and the $\Delta_n$, defined by (\ref{delta}) are such that 
    \begin{itemize}
     \item $\Delta_{n+1}$ $\asymp$ $\Delta_n$ \\
     \item $\frac{\ln |a_n|}{\ln \ln \Delta_n} \lesssim \Delta_n \lesssim \ln |a_n|$,
     \end{itemize} then there is a meromorphic inner function $\Theta$ on $\mathbb C_+$ such that  $\{a_n\}$ is the spectrum of $\Theta$ and $|\Theta'|$ is uniformly bounded.
    \end{lemma}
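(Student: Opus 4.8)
The plan is to produce $\Theta$ through the Clark measure recipe of the Preliminaries, so that the spectrum is the prescribed set automatically, and then to reduce the whole statement to a single pointwise estimate on the phase derivative. Concretely, I would take $\mu=\sum_n w_n\delta_{a_n}$ with weights $w_n\asymp\Delta_n$ (possibly with a slowly varying correction, and with the comparison constants fixed only at the end) and set $\Theta=(K\mu-1)/(K\mu+1)$. Since $w_n\asymp\Delta_n\lesssim\ln|a_n|$ while the separated points $a_n$ grow at least linearly, $\sum_n w_n/(1+a_n^2)<\infty$, so $\mu$ is Poisson-finite and $\Theta$ is a genuine MIF with $\sigma(\Theta)=\{a_n\}$. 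As $\Theta=e^{i\phi}$ with $\phi$ increasing, $|\Theta'|=\phi'$ on $\mathbb R$, and the lemma becomes the assertion that $\phi'$ is uniformly bounded.

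First I would record the working formula for $\phi'$. Writing $K\mu(x)=iG(x)$ with $G(x)=\tfrac1\pi\sum_n w_n\big(\tfrac1{x-a_n}+\tfrac{a_n}{1+a_n^2}\big)$, the identity $K\mu=(1+\Theta)/(1-\Theta)$ gives $G=\cot(\phi/2)$ on $\mathbb R$, and differentiating yields
\[
\phi'(x)=\frac{\tfrac2\pi\sum_n w_n/(x-a_n)^2}{1+G(x)^2}=:\frac{N(x)}{1+G(x)^2}.
\]
At a spectrum point this already gives $\phi'(a_n)=2\pi/w_n\asymp1/\Delta_n$, and near $a_n$ the quotient behaves like a Poisson kernel of width $\asymp w_n$, so there is no blow-up adjacent to the spectrum. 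The content of the lemma is therefore concentrated at the single point of each gap where $G$ vanishes (equivalently $\Theta=-1$), since there the denominator is smallest and $\phi'$ equals $N$.

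The heart of the argument is to estimate $N$ at that zero of $G$, which amounts to showing the zero stays a definite fraction of the way across its gap. I would fix $x$ in the gap $(a_m,a_{m+1})$, split both $N$ and $G$ into the two nearest terms and a tail $R_m$, and treat them separately. The near terms are controlled by $\Delta_{n+1}\asymp\Delta_n$: they contribute $\asymp1/\Delta_m$ to $N$ and the expected $\cot$-type behaviour to $G$. For the tail I would use the exact identity $g(x)-g(0)=x\sum_k w_k/\big(a_k(x-a_k)\big)$, where $g=\pi G$, to see that the apparent linear drift of $G$ is in fact reorganized by the slow variation of the gaps; here $\Delta_n\asymp\ln|a_n|$ together with $\Delta_{n+1}\asymp\Delta_n$ makes the local density $1/\Delta(x)$ vary so slowly across a gap (a relative amount $\asymp1/|a_m|$) that the balance point of $G$ cannot be dragged to within less than $\asymp\Delta_m$ of either endpoint. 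Consequently $N\lesssim1/\Delta_m\lesssim1$ even at the zero of $G$, and combining this with the near-spectrum estimate gives $\phi'\lesssim1$ everywhere.

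I expect this tail estimate to be the main obstacle, and the place where the precise hypotheses are spent. The upper bound $\Delta_n\lesssim\ln|a_n|$ keeps the gaps from growing fast enough to produce a genuine drift in $G$ — exactly the mechanism behind Baranov's counterexamples, where clustering or one-sided growth pushes the zero of $G$ onto the spectrum and forces $\phi'\to\infty$ — while the lower bound $\Delta_n\gtrsim(\ln|a_n|)/\ln\ln\Delta_n$ is the threshold that makes the tail series converge with room to spare and lets the comparison constants in $w_n\asymp\Delta_n$ be absorbed; the hypothesis $\Delta_{n+1}\asymp\Delta_n$ is what licenses replacing neighbouring weights and gaps by a single local scale throughout. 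Once the zero of $G$ is pinned away from the endpoints the remaining computations are routine, so essentially all the difficulty sits in the uniform control of $R_m$.
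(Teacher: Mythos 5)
Your construction (single Clark measure with $w_n\asymp\Delta_n$, reduction to $\phi'=N/(1+G^2)$, danger points at the zeros of $G$) is coherent and is genuinely different from the paper's route: the paper never uses $\sum_n\Delta_n\delta_{a_n}$ for this lemma, but instead builds $\Theta$ from Krein's shift formula with $u=1_E-\tfrac12$, $E=\bigcup_n(a_n,b_n)$, $b_n$ the midpoints, so that \emph{both} level sets are prescribed: $\{\Theta=1\}=\{a_n\}$ and $\{\Theta=-1\}=\{b_n\}$. It then bounds the two families of Clark masses by $\alpha_n,\beta_n\lesssim\Delta_n\ln\Delta_n$ via a residue computation, and estimates $|\Theta'(x)|$ by the smaller of $\sum\alpha_n/(x-a_n)^2$ and $\sum\beta_n/(x-b_n)^2$, one of which always has all its singularities at distance $\gtrsim\Delta_m$ from $x$. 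That ``min of two Clark measures'' trick exists precisely to avoid the question your proof must answer: where, inside each gap, does the point with $\Theta=-1$ (your zero of $G$) actually sit.

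And that is exactly where your proposal has a genuine gap. The central claim --- that slow variation of the density keeps the tail of $G$ of size $O(1)$, hence pins the zero of $G$ at distance $\gtrsim\Delta_m$ from the endpoints, hence $N\lesssim1/\Delta_m$ there --- is false under the hypotheses of the lemma. Comparing the point masses with Lebesgue measure, the Lebesgue part of the tail is indeed $O(1)$, but the accumulated discrepancy is of size $\sum_n\Delta_n^2/(a_n-x)^2$, and the best available bound (this is precisely the computation in the paper's proof, where it appears as the estimate of the residue tail integral) is $\tfrac{\ln\Delta_m}{\Delta_m}+\tfrac{\ln|a_m|}{\Delta_m}\lesssim\ln\ln\Delta_m$, which is unbounded. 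For asymmetric sequences at the bottom of the admissible range --- say $\Delta_n\asymp\ln|a_n|/\ln\ln\Delta_n$ on one half-line and $\Delta_n\asymp\ln|a_n|$ on the other --- the left and right discrepancies do not cancel, the drift really is of order $\ln\ln\Delta_m$, and the zero of $G$ can be dragged to distance $\asymp\Delta_m/\ln\ln\Delta_m$ from $a_m$, where $N\asymp(\ln\ln\Delta_m)^2/\Delta_m$, not $1/\Delta_m$. (This is also why the lemma's lower hypothesis reads $\Delta_n\gtrsim\ln|a_n|/\ln\ln\Delta_n$ rather than $\Delta_n\asymp\ln|a_n|$.) The good news is that your scheme is repairable, because $(\ln\ln\Delta_m)^2=o(\Delta_m)$: once you \emph{prove} the tail bound $\lesssim\ln\ln\Delta_m$ by the integral-comparison argument --- which is where both hypotheses are spent, the upper bound giving $\Delta_j^2/(a_j-a_n)^2\lesssim\int_{a_j}^{a_{j+1}}\ln t\,(t-a_n)^{-2}dt$ and the lower bound giving $\ln|a_m|/\Delta_m\lesssim\ln\ln\Delta_m$ --- you can split into $d\le c\,\Delta_m/\ln\ln\Delta_m$, where the singular term dominates the tail and $\phi'\lesssim1/\Delta_m$, and $d\ge c\,\Delta_m/\ln\ln\Delta_m$, where $\phi'\le N\lesssim(\ln\ln\Delta_m)^2/\Delta_m\lesssim1$. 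As written, though, the pinning claim is wrong, no quantitative tail estimate is proved, and that estimate is the real content of the lemma.
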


    Next, we consider sequences with slightly larger gaps. Baranov's counter example of the one sided sequence $\mathbb N$ leads us to ask the natural question- if we have $\mathbb N$ on one side, how sparse can the sequence be on the other side? Simple computations tell us that on the other side, the gaps may be at most geometrically increasing, i.e., $|\lambda_n| \lesssim e^{c|n|}$, for some $c>0$. To put it precisely, 
 \begin{observation}\label{obs}
 Let $\Theta$ be an MIF on $\mathbb C_+$ with uniformly bounded derivative on $\mathbb R$ and $\Lambda$ the spectrum of $\Theta$.
 If $\Lambda_{\pm}= \Lambda \cap \mathbb R_{\pm} $ and $\Lambda_{+}=\mathbb N$ ,  then $\exists$ a $c \geq 0$ such that 
 $|\lambda_n| \lesssim e^{c|n|}$ for $\lambda_n \in \Lambda_{-}$.
 \end{observation}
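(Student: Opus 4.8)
The plan is to recast both hypotheses as statements about the phase function and then to show that the dense positive spectrum $\mathbb N$ forces the phase to decay at least logarithmically as $x\to-\infty$. Write $\Theta=e^{i\phi}$ on $\mathbb R$ with $\phi$ increasing and real-analytic; since $|\Theta'|=\phi'$, the hypothesis is $\phi'\le M$ for some constant $M$. Because $\phi$ is increasing and $\sigma(\Theta)=\phi^{-1}(2\pi\mathbb Z)$, the number $N(R)$ of points of $\Lambda_-$ in $(-R,0)$ equals $(\phi(0)-\phi(-R))/2\pi$ up to a bounded additive error, so if $\lambda_{-m}$ is the $m$-th point of $\Lambda_-$ then $m\asymp N(|\lambda_{-m}|)$. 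Hence the asserted bound $|\lambda_{-m}|\lesssim e^{c|m|}$ is equivalent to the single lower estimate
\[
\phi(0)-\phi(-R)\gtrsim \log R ,
\]
and the whole statement reduces to proving this inequality (the case $\Lambda_-$ finite being trivial).

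To produce it I would use the Riesz--Smirnov factorization $\Theta=e^{iaz}B$, with $B$ the Blaschke product over zeros $z_k=x_k+iy_k$, $y_k>0$, and the standard phase-derivative formula
\[
\phi'(x)=a+\sum_k\frac{2y_k}{(x-x_k)^2+y_k^2}.
\]
Two consequences of $\phi'\le M$ drive the argument. First, evaluating at $x=x_k$ gives $2/y_k\le\phi'(x_k)\le M$, so every zero satisfies $y_k\ge 2/M$: bounded derivative pushes all zeros uniformly off $\mathbb R$. Second, since $\Lambda_+=\mathbb N$ has density one, $\phi(x)=2\pi x+O(1)$ for $x>0$, which pins the mean type and the density $\rho$ of the positive zeros through the relation $a+2\pi\rho=2\pi$; in particular $a$ and $\rho$ cannot both be small.

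The heart of the matter is to integrate the formula over $[-R,0]$. The exponential-type part contributes exactly $aR$, which already dominates $\log R$ whenever $a$ is bounded below. When $a$ is small, $\rho$ is bounded below, and I would instead sum the tails of the positive zeros: the contribution of a zero $z_k$ with $x_k>0$ to $\phi(0)-\phi(-R)$ is
\[
2\left(\arctan\frac{y_k}{x_k}-\arctan\frac{y_k}{R+x_k}\right)\approx 2y_k\left(\frac1{x_k}-\frac1{R+x_k}\right),
\]
and summing this against a positive density of zeros with $y_k\ge 2/M$ produces a harmonic sum of size $\asymp\log R$. Combining the two cases yields $\phi(0)-\phi(-R)\gtrsim\log R$, and hence the observation. (Taken in the limit $R\to\infty$, the same computation recovers Baranov's qualitative fact that a one-sided spectrum $\mathbb N$ is impossible, since then $\phi(0)-\phi(-R)\to\infty$ would force infinitely many negative spectral points.)

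I expect the main obstacle to be precisely this last estimate. The per-zero contribution is \emph{not} monotone in the height $y_k$: a zero with $y_k\gg R$ deposits almost all of its phase beyond $-R$ and so contributes essentially nothing on $[-R,0]$, so the lower bound $y_k\ge 2/M$ by itself does not suffice and one must guarantee that at every scale up to $R$ there are enough zeros of \emph{moderate} height. To do this I would exploit the rigidity of $\Lambda_+=\mathbb N$ rather than just its density: since $\arg\Theta$ increases by exactly $2\pi$ over each unit interval $[n,n+1]$, a winding/argument-principle count on the rectangles $[n,n+1]\times[0,H]$ should locate, for each such interval, a zero of bounded height, thereby converting the exact spectrum $\mathbb N$ into the quantitative distribution of positive zeros needed to close the logarithmic estimate.
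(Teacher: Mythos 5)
Your reduction of the statement to the estimate $\phi(0)-\phi(-R)\gtrsim\log R$, the lower bound $y_k\ge 2/M$ on the heights of the zeros, and the harmonic-sum computation over the tails of the positive zeros are all correct, and they match the skeleton of the paper's own argument (which integrates $\theta'(x)=\sum_n y_n/((x-x_n)^2+y_n^2)$ over $(t,0)$ and compares with $\sum_{n\le|t|}1/n\asymp\ln|t|$). The genuine gap is exactly where you yourself located it --- producing enough zeros of moderate height with horizontal density one --- but the patch you propose is false. The increase of $\arg\Theta$ by $2\pi$ over each unit interval does \emph{not} locate a zero of bounded height: consider the inner function with zero set $\mathbb Z+iH$, namely $\Theta_H(z)=\sin(\pi(z-iH))/\sin(\pi(z+iH))$, for huge $H$. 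Its spectrum is the arithmetic progression $\mathbb Z+\tfrac12$, its phase satisfies $\phi'\approx 2\pi$ uniformly (so the derivative is bounded and the phase increases by exactly $2\pi$ on every unit interval), yet every zero has height exactly $H$. Thus the local winding data on rectangles $[n,n+1]\times[0,H']$ is indistinguishable from that of a function with no zeros below height $H$, and no argument-principle count on such rectangles can yield a bounded-height zero. (This example does not contradict the Observation --- its $\Lambda_-$ has density one --- but it kills the proposed lemma.) Your dichotomy also leans on the unproved relation $a+2\pi\rho=2\pi$, i.e.\ on the existence of a density for the positive zeros, which does not follow from anything you established.

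The paper closes the gap with two ingredients your proposal lacks. First, a monotonicity observation that shows tall zeros are not the enemy you feared: for a zero at $x_n+iy_n$, $x_n>0$, the contribution $\int_t^0 y_n\,dx/((x-x_n)^2+y_n^2)$ to the negative phase is \emph{increasing} in $y_n$ throughout the range $y_n^2\le x_n(x_n+|t|)$, so in the relevant regime raising heights only makes $\Lambda_-$ denser; the extremal (sparsest) configuration is therefore $y_n\asymp 1$, the lower bound being your own estimate $y_n\ge 2/M$. This is also why the $\mathbb Z+iH$ example is harmless: its tall zeros produce \emph{more} negative spectrum, not less. Second, in place of a local winding count, a global entire-function argument supplies the horizontal distribution: writing $\Theta=E^{\#}/E$ for a de Branges function $E$, the hypothesis $\Lambda_+=\mathbb N$ forces $E-E^{\#}$ to vanish on $\mathbb N$, hence to have exponential type at least $\pi$; therefore $E$ has type at least $\pi$, and, being in the Cartwright class, its zeros have unit density, i.e.\ $z_n\asymp n+iy_n$. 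Feeding this into your harmonic sum gives $|\sigma(\Theta)\cap(t,0)|\gtrsim\sum_{n\le|t|}1/n\asymp\ln|t|$, which is the desired estimate. In short: same skeleton, but the decisive step requires the density statement from entire-function theory together with the monotonicity in the height, not a local argument-principle count, which provably cannot work.
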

 \begin{proof}
 If $z_n = x_n + iy_n$ are the zeros of $\Theta=e^{i\theta}$, then  
 \begin{equation*} \theta'(x) = \sum_n \frac{y_n}{(x-x_n)^2 + y_n^2}.\end{equation*}
 We notice that these are sums of Poissons kernels, with the property that $\int_{\mathbb R} \frac{y_n}{(x-x_n)^2 + y_n^2} dx = \pi$. Let us restrict our attention to the zeroes in the upper right quadrant, i.e. $x_n>0$ and $y_n >0$. For any $t<0$ and fixed $x_n$, the integral $\int_{t}^{0} \frac{y_n}{(x-x_n)^2 + y_n^2} dx $ attains minimum at $y_n^2 = x_n(x_n-t)/t$ and increases for $y_n^2 \geq x_n(x_n-t)/t$. But we notice that $ \int_{t}^{0} \frac{y_n}{(x-x_n)^2 + y_n^2} dx \leq \int_{t}^{0} \theta'(x) dx = |\sigma(\Theta) \cap (t,0)|.$  So, when $|t|$ is large enough, then larger the $y_n$s,  the denser $\Lambda_{-}$. Thus, to explore the case when $\Lambda_-$ is as sparse as possible, we must assume that $y_n \leq K $ for all $n$, for some $K>0$.  On the other hand, the zeros of $\Theta$ must be bounded away from the real line in order for $|\Theta'|$ to be bounded. Thus, we can assume, without loss of generality, that $y_n \asymp 1$. Consider the entire function $E$ associated with $\Theta$, i.e. $\Theta(z) = E^{\#}(z)/ E(z) =  \overline{E(\overline{z})}/ E(z)$. These functions are called de Branges functions (associated with an MIF). We refer the reader to \cite{DB} and \cite{POLYA} for more on de Branges functions. We know that  $E - E^{\#}$ has zeroes on $\mathbb N$ (at least), and so must be of exponential type at least $\pi$. Thus, the exponential type of $E$ is at least $\pi$ and so the zeros of $E$ are of the form $z_n \asymp  n + iy_n$ where $n\in \mathbb N$. Then,
 \begin{equation*}
 |\sigma(\Theta) \cap (t,0)| \gtrsim \int_t^0 \sum_{\mathbb N} \frac{1}{(x-n)^2 + 1} dx \asymp \sum_{n \leq |t|} \frac{1}{n} \asymp \ln |t|.  
 \end{equation*}
 \end{proof}
 The extreme case in the above situation, i.e. when $|\lambda_n| \asymp e^{c|n|}$, has the property that the gaps are co-measurable, i.e., $\Delta_n \asymp \Delta_{n+1}$. This gives us motivation for our next result where we analyse a more general case of co-measurable gaps.   We recall that the choice of the weights $w_n$ determine the growth of the function. Let us choose $w_n \asymp \Delta_n$.

 \begin{lemma}\label{medium}
   If ${a_n}$ is a separated sequence on $\mathbb R$ and $\Delta_n$, defined as in (\ref{delta}) are such that 
   \begin{itemize}
    \item $\Delta_{n+1}$ $\asymp$ $\Delta_n$ and\\
    \item $\Delta_n \gtrsim (\ln|a_n|)^{2}$,      
    \end{itemize}
    then by choosing $w_n \asymp \Delta_n$, $\Theta$ defined as in (\ref{theta}) is a meromorphic inner function on $\mathbb{C_+}$ with spectrum $\{a_n\}_{-\infty}^{\infty}$ such that $|\Theta'|$ is uniformly bounded on $\mathbb R$.
    \end{lemma}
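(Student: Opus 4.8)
The plan is to estimate the phase derivative $\phi'=\theta'$ directly from the Clark-measure construction and show it is bounded above by a constant, given the choice $w_n\asymp\Delta_n$. Recall that if $\Theta=e^{i\phi}$ is built from the measure $\mu=\sum_n w_n\delta_{a_n}$ via $\Theta=(K\mu-1)/(K\mu+1)$, then the phase derivative admits an explicit series representation coming from the Herglotz/Poisson structure of $K\mu$; concretely, up to the usual normalizing constants one has, for $x\in\mathbb R$ away from the $a_n$,
\begin{equation*}
\phi'(x)\;\asymp\;\frac{\sum_n \dfrac{w_n}{(x-a_n)^2}}{\Bigl(\sum_n w_n\Bigl(\dfrac{1}{a_n-x}-\dfrac{a_n}{1+a_n^2}\Bigr)\Bigr)^2+\Bigl(\pi\sum_n w_n\,\delta\text{-contribution}\Bigr)^2}.
\end{equation*}
Rather than work with this quotient globally, I would localize: for a point $x$ lying in a fixed gap interval $(a_k,a_{k+1})$, I split the derivative into the contribution of the two nearest masses $a_k,a_{k+1}$ and the tail over the remaining $a_n$. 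This is the standard decomposition and it is where the hypotheses enter.

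First I would handle the \emph{local} term. With $w_k\asymp\Delta_k$ and $|x-a_k|$ ranging over a gap of length $\asymp\Delta_k$, the near contribution to $\phi'(x)$ is of order $w_k/|x-a_k|^2\cdot(\text{denominator})^{-1}$; the comeasurability hypothesis $\Delta_{n+1}\asymp\Delta_n$ guarantees that $K\mu(x)$ does not blow up or collapse as $x$ crosses a gap, so the real part of $K\mu$ stays comparable to a fixed constant on the ``middle'' portion of each gap and the local term contributes $O(1)$. This is essentially the mechanism already exploited in \cite{BM3} for bounded gaps, and comeasurability is exactly what makes it survive when the gaps themselves grow.

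Second, and this is where I expect the main obstacle, I would control the \emph{tail} $\sum_{n\neq k,k+1} w_n/(x-a_n)^2$ and show it is $O(1)$ uniformly in $x$ and $k$. Using $w_n\asymp\Delta_n$ and $|x-a_n|\gtrsim$ (the accumulated gap-lengths between $a_k$ and $a_n$), comeasurability lets me compare $\sum_{m}\Delta_{k+m}/\bigl(\sum_{j=1}^{m}\Delta_{k+j}\bigr)^2$ to a telescoping/integral-comparison sum that converges geometrically in $m$; so the tail is bounded by a constant provided the denominator of the Herglotz quotient is bounded below. That lower bound on the denominator is precisely where $\Delta_n\gtrsim(\ln|a_n|)^2$ is needed: it forces $|a_n|\lesssim e^{c\sqrt{\Delta_n}}$, so $\ln|a_n|\lesssim\sqrt{\Delta_n}$, which in turn makes the ``renormalization'' term $\sum_n w_n a_n/(1+a_n^2)$ and the far-field sum $\sum_n w_n/(a_n-x)$ converge and stay comparable to a constant multiple of the local real part $\Re K\mu(x)$. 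The delicate point is to verify that this lower bound holds uniformly across \emph{all} gaps simultaneously, not just for large $|x|$; I would treat a bounded central region and the two tails $x\to\pm\infty$ separately, using the convergence criterion $\sum w_n/(1+a_n^2)<\infty$ (which $w_n\asymp\Delta_n$ together with $\Delta_n\gtrsim(\ln|a_n|)^2$ is designed to secure) to close the argument. Once both the local and tail contributions are shown to be $O(1)$ uniformly, $|\Theta'|=|\phi'|$ is uniformly bounded on $\mathbb R$ and the lemma follows.
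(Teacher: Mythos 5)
Your high-level skeleton does parallel the paper's proof: the paper also splits each gap into an outer region ($|x-a_k|\geq \frac{\delta}{2}\Delta_k/\ln|a_k|$, where the numerator $\sum_n w_n/(x-a_n)^2$ is estimated directly) and an inner region near $a_k$ (where one shows $|K\mu|$ is large, i.e.\ the denominator dominates, and finishes with a Cauchy estimate on disks). But the quantitative core of your argument is missing, and the claim you substitute for it is false. The paper's proof rests on its Lemma \ref{lem}: with $w_n\asymp\Delta_n$ and $\Delta_{n+1}\asymp\Delta_n$, the renormalized tail satisfies
\[
\Big|\sum_{n\neq k}\Big(\frac{w_n}{a_n-a_k}-\frac{w_n a_n}{1+a_n^2}\Big)\Big| \;\lesssim\; \ln|a_k|,
\]
proved by comparing the sum to $\int\big(\frac{1}{t-a_k}-\frac{t}{1+t^2}\big)\,dt$; comeasurability is exactly what makes the measure $\sum\Delta_n\delta_{a_n}$ behave like Lebesgue measure in this comparison. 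This quantity is \emph{not} $O(1)$: it genuinely grows logarithmically (try $a_n=(\sgn n)\,n^2$), so your assertion that the far-field sum and the renormalization term ``converge and stay comparable to a constant multiple of the local real part'' cannot be repaired. Indeed, if that assertion were true, the denominator-domination argument would cover the entire gap and Lemma \ref{medium} would hold with no lower bound on $\Delta_n$ at all, contradicting the counterexamples in the last section of the paper. The missing idea is precisely the $\ln|a_k|$ growth rate of the tail, without which you cannot locate the threshold at which your two regimes must meet.

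Relatedly, you misidentify what the hypothesis $\Delta_n\gtrsim(\ln|a_n|)^2$ is for. It has nothing to do with securing $\sum w_n/(1+a_n^2)<\infty$ (that is automatic from separation and comeasurability) nor with making the renormalized sum converge. Its role is to make the two regimes overlap: the inner (denominator) regime works where the local term $w_k/|x-a_k|\gtrsim \ln|a_k|/\delta$ beats the $C'\ln|a_k|$ tail, i.e.\ for $|x-a_k|\lesssim \Delta_k/\ln|a_k|$; the outer (numerator) regime works where $w_k/|x-a_k|^2\lesssim 1$, i.e.\ for $|x-a_k|\gtrsim\sqrt{\Delta_k}$. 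These cover the whole gap precisely when $\sqrt{\Delta_k}\lesssim \Delta_k/\ln|a_k|$, which is exactly $\Delta_k\gtrsim\ln^2|a_k|$. (The paper uses the same hypothesis once more, to guarantee its Case-2 disks have radius $\frac{\delta}{2}\Delta_k/\ln|a_k|\gtrsim\delta\ln|a_k|\gtrsim 1$, so that the Cauchy estimate gives a uniform bound.) Note also that comeasurability alone only handles $|x-a_k|\asymp\Delta_k$, the true middle of the gap; your first step, attributing boundedness on the whole ``middle portion'' to comeasurability, silently skips the transition region $\sqrt{\Delta_k}\lesssim|x-a_k|\lesssim\Delta_k/\ln|a_k|$ where both hypotheses and the logarithmic tail bound are consumed.
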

    
    Some examples of such sequences are $a_n=(\sgn n) |n|^k$, where $k >0$ and $a_n =(\sgn n) r^{|n|}$, where $r > 1$. 
    
    Next, we consider sequences that are sparse. By sparse, we mean sequences that are at least geometrically increasing with common ratio bigger than $1$, for instance $a_n= (\sgn n)e^{e^{|n|}}$. The following statement may seem technical, but all that is being said is: If we consider finite clusters of points such that consecutive clusters are sparse, then choosing the weights $w_n \asymp 1$, the corresponding inner function will have a bounded derivative on $\mathbb R$.

 \begin{lemma} \label{sparse}
       For each $n \in \mathbb N$, let us consider a finite sequence (cluster) of points $\{a_n^j\}_{1\leq j\leq m_n}$ ($m_n$ being uniformly bounded) that is defined by the property $\frac{a_n^{j+1}}{a_n^j} \rightarrow 1 $ as $n \rightarrow \infty$ and $1 \leq j \leq m_n$. Moreover, the gaps between consecutive clusters is large, in the sense that there is a $d>0$ such that  $\frac{a_{n+1}^j}{a_n^l} -1 >d>0$ for $n \geq 0$ and $\frac{a_{n-1}^j}{a_n^l} -1 >d>0$ for $n < 0$ . Then, there is a meromorphic inner function $\Theta$ such  $\sigma(\Theta)=\{a_n^j\}$ and $|\Theta'|$ is uniformly bounded on $\mathbb R$. 
   \end{lemma}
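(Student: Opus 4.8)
The plan is to realize $\Theta$ through the Clark--Cauchy construction of the preliminaries with the prescribed weights $w_n^j \asymp 1$, and then to reduce the boundedness of $|\Theta'|$ on $\mathbb R$ to a single pointwise estimate for the phase derivative. Writing $\mu = \sum_{n,j} w_n^j\,\delta_{a_n^j}$ and $\Theta = (K\mu-1)/(K\mu+1) = e^{i\theta}$, a direct computation shows that on $\mathbb R$ one has $K\mu(x) = -i\,h(x)$ with $h$ real, where
\begin{equation*}
h(x) = \frac1\pi \sum_{n,j} w_n^j\Big(\frac{1}{a_n^j - x} - \frac{a_n^j}{1+(a_n^j)^2}\Big), \qquad h'(x) = \frac1\pi \sum_{n,j} \frac{w_n^j}{(x-a_n^j)^2} > 0 .
\end{equation*}
Since $(w-1)/(w+1)$ sends $i\mathbb R$ to the circle, this yields $\theta(x) = \pi + 2\arctan h(x)$, and therefore $|\Theta'(x)| = \theta'(x) = 2h'(x)/(1+h(x)^2)$. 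The whole problem is thus to bound $\sup_x \theta'(x)$, and because $w_n^j \asymp 1$ every constant produced below will depend only on the three structural parameters of the configuration: the uniform bound $M = \sup_n m_n$, the separation $\delta_0$ of the full sequence, and the inter-cluster gap constant $d$. (In particular $\theta'(a_n^j) = 2\pi/w_n^j \asymp 1$, so the bound holds at the spectrum itself; the content is the control between and away from the points.)

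Fix $x$ and let $N$ index the cluster nearest to $x$; for $|x|$ large all points of that cluster have comparable magnitude $A \asymp |a_N^j|$. The hypotheses give a clean separation of scales: the internal ratios tending to $1$ force the points of cluster $N$ to lie within $o(A)$ of one another, while the condition $a_{n\pm1}^j/a_n^l > 1+d$ forces every point of every other cluster to lie at distance $\gtrsim A$ from $x$. I accordingly split $h = h_{\mathrm{near}} + h_{\mathrm{far}}$ and $h' = h'_{\mathrm{near}} + h'_{\mathrm{far}}$, where ``near'' collects the at most $M$ points of cluster $N$. The far part of $h'$ is dominated by $\tfrac1\pi\sum_{\mathrm{far}} w/(x-a)^2 \lesssim M A^{-2}\sum_{m\ge1}(1+d)^{-2m}$, which is $O(A^{-2})$ and hence negligible. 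For the far part of $h$ I use the identity $\tfrac{1}{a-x} - \tfrac{a}{1+a^2} = \tfrac{1+ax}{(a-x)(1+a^2)}$ together with the geometric growth $|a_n^j| \gtrsim (1+d)^{|n|}$ implied by the gap condition; the resulting series converges geometrically and is bounded by a constant independent of $x$, so $h_{\mathrm{far}}(x) = O(1)$ uniformly.

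It remains to bound $\theta'$ using only the near data, a configuration of at most $M$ points that are pairwise at least $\delta_0$ apart. I distinguish two cases. If $x$ lies within $\delta_0/10$ of its nearest spectrum point $p$, then for $|x-p|$ small enough the single term $\tfrac{1}{\pi(p-x)}$ dominates both series, so that $h'(x)\asymp|x-p|^{-2}$ while the bounded remainder cannot prevent $|h(x)|\gtrsim|x-p|^{-1}$, and dividing gives $\theta'(x)\lesssim1$; for $|x-p|$ in the complementary bounded range, $h'(x)$ is itself $O(1)$ and $\theta'(x)\le 2h'(x)=O(1)$. If instead $x$ is at distance $\ge \delta_0/10$ from every spectrum point, then $h'(x) \le M/\big(\pi(\delta_0/10)^2\big) + O(A^{-2}) = O(1)$, and together with $1+h(x)^2 \ge 1$ this yields $\theta'(x) \le 2h'(x) = O(1)$. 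For $x$ in any fixed bounded region the expression $\theta'$ is a finite smooth function of a separated point set and is trivially bounded, so these estimates combine to a bound uniform in $x$, completing the proof.

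The step I expect to be the genuine obstacle is the uniform control of $h_{\mathrm{far}}$. Although $h(x)$ is finite at every point by the convergence criterion on $\mu$, its size a priori depends on which cluster $x$ is near, and bounding it independently of the scale $A \to \infty$ is exactly where the sparseness of the clusters (the factor $1+d$) and the precise Cauchy-transform counterterms $a/(1+a^2)$ must be exploited; without geometric separation the tail of $h$ would grow and the denominator $1+h^2$ could fail to absorb the near spike. By contrast, the near estimate is an essentially scale-invariant finite computation once separation is invoked, and the reduction to $\theta' = 2h'/(1+h^2)$ is routine. The choice $w_n^j \asymp 1$ is what simultaneously pins $\theta'$ to be $\asymp 1$ at the spectrum and keeps these far tails summable.
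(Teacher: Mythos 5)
Your proposal is correct, but it follows a genuinely different route from the paper. The paper does not estimate the phase derivative at every real point; instead it verifies a purely discrete condition --- that the sums $\sum_{n \neq k}\bigl(\frac{w_n}{a_n-a_k} - \frac{a_n w_n}{1+a_n^2}\bigr)$ are bounded uniformly in $k$, i.e.\ only \emph{at the spectrum points} --- and then invokes Lemma 5.2 of Baranov \cite{ABAR} as a black box to conclude that the Clark construction with $w_n \equiv 1$ has bounded derivative. (The cluster case is then reduced to the singleton case by noting that each cluster of at most $M$ separated points adds a uniformly bounded amount to these sums.) You instead prove the needed instance of Baranov's lemma from scratch: you write $\theta' = 2h'/(1+h^2)$ and control it at \emph{every} $x \in \mathbb R$ via the near/far decomposition, with the $1+h^2$ denominator absorbing the single-pole spike near a spectrum point. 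The core computation is the same in both arguments --- your bound on $h_{\mathrm{far}}$ uses exactly the identity $\frac{1}{a-x}-\frac{a}{1+a^2} = \frac{1+ax}{(a-x)(1+a^2)}$ that underlies the paper's splitting into $S_1+S_2$, and in both cases the geometric gap $1+d$ is what makes the resulting series converge uniformly --- but you evaluate it at arbitrary $x$ rather than at the points $a_k$, which is precisely the extra work that citing \cite{ABAR} lets the paper skip. What each approach buys: the paper's proof is shorter and modular, at the cost of depending on the exact hypotheses of an external lemma; yours is self-contained, makes transparent \emph{why} bounded discrete sums plus weights $\asymp 1$ force $|\Theta'|$ to be bounded everywhere (including the consistency check $\theta'(a_n^j) = 2\pi/w_n^j$, which is correct under the paper's normalization of $K\mu$), and handles clusters and singletons in one pass rather than by a two-step reduction. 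The only places where your write-up leans on assertions rather than displayed estimates --- uniformity of $h_{\mathrm{far}} = O(1)$ across scales, and the treatment of $x$ in a bounded region by continuity of $\theta'$ --- are sound and fillable exactly as you indicate.
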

   To summarize, we have the  following
    \begin{theorem}
   Let $\left\{a_n\right\}$ be a separated sequence on $\mathbb R$ satisfying one of the conditions below,
   \begin{enumerate}
   \item $\Delta_{n+1}$ $\asymp$ $\Delta_n$ and $ \frac{\ln|a_n|}{\ln\ln|\Delta_n|} \lesssim \Delta_n \lesssim \ln|a_n|$  OR \\
   \item $\Delta_{n+1}$ $\asymp$ $\Delta_n$ and $\Delta_n \gtrsim (\ln |a_n|)^{2}$ OR \\
   \item there is a $d>0$ such that the sequence can be partitioned into clusters $\{a_n^j\}_n$, with number of points in each cluster being uniformly bounded, such that for any cluster, $\frac{a_n^j}{a_n^{j+1}} \rightarrow 1$ and between successive clusters :$\frac{a_{n+1}^j}{a_n^l}-1>d>0.$
   \end{enumerate}
   then there exists a meromorphic inner function with spectrum $\{a_n\}$ with uniformly bounded derivative on $\mathbb R$.
  \end{theorem}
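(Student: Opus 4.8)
The plan is to observe that this theorem is merely a consolidation of the three lemmas proved above, so that its proof reduces to a case analysis. Indeed, each of the three listed conditions is, by construction, exactly the hypothesis of one of the lemmas: condition (1) reproduces the two bullet points of Lemma \ref{small}, condition (2) those of Lemma \ref{medium}, and condition (3) is verbatim the clustering hypothesis of Lemma \ref{sparse}. Thus no new analytic work is required at the level of the theorem; I would simply split on which condition $\{a_n\}$ satisfies and invoke the matching lemma.

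Concretely, I would argue as follows. If $\{a_n\}$ satisfies (1), then $\Delta_{n+1} \asymp \Delta_n$ together with $\frac{\ln|a_n|}{\ln\ln\Delta_n} \lesssim \Delta_n \lesssim \ln|a_n|$ places us squarely under Lemma \ref{small}, which yields an MIF $\Theta$ with $\sigma(\Theta) = \{a_n\}$ and $|\Theta'|$ uniformly bounded. If $\{a_n\}$ satisfies (2), then $\Delta_{n+1} \asymp \Delta_n$ and $\Delta_n \gtrsim (\ln|a_n|)^2$ are precisely the hypotheses of Lemma \ref{medium}, and the inner function constructed there with weights $w_n \asymp \Delta_n$ is the desired one. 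Finally, if $\{a_n\}$ satisfies (3), I would partition the sequence into the prescribed clusters and invoke Lemma \ref{sparse} with weights $w_n \asymp 1$. Since the sequence is assumed to satisfy at least one of the three conditions, one of the three lemmas always applies, and the theorem follows.

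Because the reduction to the lemmas is immediate, the only genuine content of the theorem lies in the lemmas themselves, and it is worth recording where the difficulty that the case split hides actually sits. In every case the MIF is built from a Clark measure $\mu = \sum_n w_n \delta_{a_n}$ through $\Theta = (K\mu - 1)/(K\mu + 1)$, and writing $\Theta = e^{i\phi}$ on $\mathbb R$ one has $|\Theta'(x)| = \phi'(x) = \sum_n \frac{y_n}{(x-x_n)^2 + y_n^2}$, a sum of Poisson kernels indexed by the zeros $z_n = x_n + iy_n$ of $\Theta$. Establishing $|\Theta'| \leq \mathrm{const}$ therefore amounts to bounding this sum uniformly in $x \in \mathbb R$, and the main obstacle --- the one each lemma confronts in its own regime --- is to choose the weights $w_n$, and hence the heights $y_n$, so that for every real $x$ the contributions of the atoms close to $x$ and those far from $x$ are simultaneously controlled by the gap growth conditions. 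The three distinct weight regimes appearing in the lemmas are exactly the responses to the three ranges of gap growth, and it is this delicate balancing of near and far contributions in the Poisson-kernel estimate, not the bookkeeping of the case split, where the real work takes place.
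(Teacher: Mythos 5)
Your proposal is correct and coincides with the paper's own treatment: the theorem is introduced with ``To summarize, we have the following,'' and its proof is exactly the case split you describe, invoking Lemma \ref{small} for condition (1), Lemma \ref{medium} for condition (2), and Lemma \ref{sparse} for condition (3). One small caveat on your closing commentary only (it does not affect the proof): in Lemma \ref{small} the inner function is built via Krein's shift formula and the measures $\mu_{\pm 1}$, not by directly prescribing Clark weights $w_n$ as in the other two lemmas.
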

  
The above results cover a wide range of sequences. What happens when the sequence falls in none of these categories? There are several ways ways in which this could happen and we have a partial converse, which is inspried by Baranov's counter example, as described in \cite{BAR}. Let us first state Baranov's result.
\begin{proposition}
Let $\{a_n\}$ be a separated sequence with the following property. Given any $N>0$, there is a cluster $\{a_n\}_{n=k}^{k+N}$ such that $a_{k+m} = a_k + m$ for $1\leq m \leq N$ and $a_{k+N+1}= a_{k+N}+N$. Then, any MIF with spectrum $\{a_n\}$ must have unbounded derivative on $\mathbb R$.
\end{proposition}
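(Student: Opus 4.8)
The plan is to argue by contradiction and to reproduce, at the scale of a single cluster, the exact mechanism behind Observation \ref{obs}. Suppose some MIF $\Theta = e^{i\phi}$ has spectrum $\{a_n\}$ and $|\Theta'| = \phi' \le M$ on $\mathbb R$. Fix a large $N$ and, invoking the hypothesis, select the corresponding cluster; after a harmless translation I may assume that the spectrum contains the unit-spaced block $\{0,1,\dots,N\}$ and that the next spectrum point to the right sits at $2N$, so that the interval $G := (N,2N)$ meets the spectrum only at its endpoints. The cluster is thus a finite copy of $\mathbb N$ placed immediately to the left of a spectral gap of the same length $N$, and the goal is to show that the cluster forces $\phi$ to increase by at least a constant times $\log N$ across $G$ — which is impossible, because $G$ is a single spectral interval and so carries a total increase of exactly $2\pi$.

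Concretely, I would first record the two elementary facts that drive the estimate. Since $\phi$ increases by exactly $2\pi$ between consecutive spectrum points, one has $\int_G \phi'(x)\,dx = 2\pi$ while $\int_0^N \phi'(x)\,dx = 2\pi N$. Writing $\Theta = B_\Lambda e^{iaz}$ and $\phi'(x) = a + \sum_j \frac{y_j}{(x-x_j)^2 + y_j^2}$, where $\lambda_j = x_j + iy_j$ are the zeros, the inequality $a \le \frac1N\int_G \phi' = \frac{2\pi}{N}$ shows that for large $N$ the exponential-type term is negligible, so essentially all of the mass $2\pi N$ that $\phi'$ accumulates over the cluster must be supplied by the Poisson kernels of the zeros. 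The bound $\phi' \le M$ forces $y_j \ge 1/M$ (because $\phi'(x_j) \ge 1/y_j$), and the unit spacing of the cluster pins the relevant zeros to $x_j \asymp j$ with $y_j \asymp 1$, exactly as in Observation \ref{obs}. With these zeros in hand, the computation of Observation \ref{obs} applied to $G$ in place of $(t,0)$ gives
\begin{equation*}
2\pi = \int_G \phi'(x)\,dx \;\ge\; \sum_{j}\int_N^{2N} \frac{y_j}{(x-x_j)^2 + y_j^2}\,dx \;\gtrsim\; \sum_{r=1}^{N}\frac{1}{r} \;\asymp\; \log N,
\end{equation*}
the sum being dominated by the zeros lying along the cluster, whose distances $r = N - x_j$ to $G$ run through $1,2,\dots,N$, each contributing a tail $\asymp 1/r$. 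Hence $\log N \lesssim 1$, which bounds $N$; since $N$ was arbitrary this is the desired contradiction, and $\phi'$ cannot be bounded.

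The main obstacle is the step asserting that the unit-spaced block genuinely forces zeros $\lambda_j \asymp j + iy_j$ with $y_j \asymp 1$ sitting along the cluster. In Observation \ref{obs} this was read off globally from the associated de Branges function: $E - E^{\#}$ vanishes on all of $\mathbb N$, hence has exponential type at least $\pi$, and this locates the zeros. For a finite cluster the global exponential-type argument is unavailable, so the localisation must be made quantitative. I expect to extract it from the two constraints already noted: the gap makes the $e^{iaz}$ contribution negligible, so the density $\int_I \phi' = 2\pi$ on each unit subinterval $I$ of the cluster cannot come from the type term, and it cannot come from distant zeros either, since their contribution to $I$ decays like the squared distance while the bound $\phi' \le M$ caps the spectral density, and hence the number of available zeros, at $M/2\pi$ per unit length. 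The density must therefore be produced by zeros within $O(1)$ of $I$, and such zeros necessarily satisfy $y_j \asymp 1$. Converting this heuristic into a clean lower bound on the Poisson mass that the cluster deposits into $G$ — uniformly in the uncontrolled part of the spectrum lying to the left of the cluster, which only increases $\phi'$ and can therefore only help — is the technical heart of the argument.
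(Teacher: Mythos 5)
Your global strategy is the right one, and it is the same tension the paper exploits: if $|\Theta'|$ were bounded, the zeros that supply the phase increase $2\pi N$ across the unit-spaced cluster would have to deposit $\gtrsim \sum_{r\le N} 1/r \asymp \log N$ of phase into the adjacent gap, which, containing no spectrum points, can absorb only $2\pi$. (The paper proves this statement through its generalization, Proposition \ref{counterexample}, which is essentially the mirrored, general-spacing form of the present claim.) But your argument is incomplete exactly where you flag it, and the missing step cannot be filled in the form you state it. The localisation ``$x_j \asymp j$ with $y_j \asymp 1$'' is not merely hard to make quantitative for a finite cluster --- the upper bound $y_j \lesssim 1$ is simply not forced by the hypotheses: a horizontal row of zeros at height $\sqrt{N}$ spaced $1/2$ apart above the cluster produces $\phi' \approx 2\pi$ there, hence a unit-spaced stretch of spectrum with locally bounded derivative, so no argument can pin the relevant zeros to unit height. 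Likewise, your auxiliary claim that $\phi'\le M$ caps the number of zeros at $M/2\pi$ per unit length is false without a height cutoff: about $My$ zeros can sit at height $y$ over a single unit interval while contributing only $O(M)$ to $\phi'$ there, and vertically stacked zeros at geometric heights contribute $O(1)$ no matter how many there are. This is precisely the loophole in your heuristic that ``distant zeros decay like the squared distance'': vertical distance does not kill contributions the way horizontal distance does. So the ``technical heart'' you defer is genuinely absent, and your intended route to it would fail.

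The paper's proof of Proposition \ref{counterexample} is designed to avoid pinning zeros at unit height. It introduces boxes of height $\sqrt{ND}$ over the gap and the cluster ($T$ and $S$, with $\tilde S$ over half the cluster), observes that zeros outside $S\cup T$ contribute to the half-cluster and to an equally long piece of the gap comparably (a constant $\kappa$ independent of $N$), and that zeros in $T$ contribute more to the gap than to the cluster; since the gap absorbs only $O(1)$ of phase while the half-cluster needs about $N\pi$, a counting argument forces at least $N-\kappa-1$ zeros into $S$, and, applied to subintervals $(t_2,u)$, yields an enumeration of the zeros in $\tilde S$ with $x_n \le t_2 + 2(n-1)D$, i.e.\ horizontal localisation along the cluster. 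Only after this does it run your harmonic-sum computation in the gap, using nothing about heights beyond $1 \le y_n \le \sqrt{ND}$ (the lower bound from the contradiction hypothesis, the upper bound from membership in the box). If you want to complete your write-up, replace the step ``zeros at $y_j\asymp 1$ along the cluster'' by such a box-counting comparison; the de Branges-function argument of Observation \ref{obs}, as you correctly note, is unavailable for a finite cluster, and no substitute can give $y_j \asymp 1$ because that assertion is not true.
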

In essence, this sequence has arithmetic clusters followed by unbounded large gaps. We generalise this result as follows.
Let $\{a_n\}$ be a sequence of points on $\mathbb R$ and a $D>0$ be a constant such that given any $N>1$, there is a cluster of points $\{a_n\}_{n=1}^{N}$ such that $a_2 - a_1\geq N D$ and $a_{n+1} - a_n \leq D$ for $n\geq2$. Notice that this sequence has clusters whose size grows unboundedly (thus excluding case 3 from above) and $\Delta_1>N\Delta_2$ (thus the gaps are not co-measurable i.e., $\Delta_n \not\asymp \Delta_{n+1}$). Such sequences serve as counterexamples and we state the result below.

\begin{proposition}\label{counterexample} Suppose $\{s_n\}$ is a separated sequence on the real line and $D>0$ is a constant such that given any $N>0$, there is a subset $\{t_n\}_{n=1}^{N}$ such that $(t_1,t_N) \bigcap \{s_m\}= \{t_n\}_{n=1}^N$ for which $t_2-t_1 >N D$ and $t_{n+1}-t_n < D$ for all $2 \leq n\leq N-1$ and let $\Theta$ be an MIF with this spectrum $\{s_n\}$. Then given any $\delta>0$, there is a zero $z_n=x_n+iy_n$ of $\Theta$ such that $0<y_n < \delta$. Hence, $|\Theta'|$ is unbounded on $\mathbb R$. \end{proposition}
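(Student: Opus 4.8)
The plan is to show that, contrary to the zeros of an MIF with bounded derivative staying uniformly away from $\mathbb R$, the arithmetic-then-gap structure forces a zero arbitrarily close to the line. Writing $\Theta=e^{i\theta}$ with $\theta$ increasing and real analytic, recall from the proof of Observation \ref{obs} that $\theta'(x)=\sum_n \frac{y_n}{(x-x_n)^2+y_n^2}$, where $\{z_n=x_n+iy_n\}$ are the zeros of $\Theta$ in $\mathbb C_+$, and that on $\mathbb R$ one has $|\Theta'|=\theta'$. The first, easy, reduction is the observation that a single zero close to the line already produces a large derivative: evaluating at $x=x_n$ gives $\theta'(x_n)\ge y_n/y_n^2=1/y_n$. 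Thus it suffices to prove the displayed claim, namely that for every $\delta>0$ some zero has $0<y_n<\delta$; the unboundedness of $|\Theta'|$ is then immediate on letting $\delta\downarrow 0$.

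To prove the claim I would argue by contradiction: fix $\delta>0$ and suppose every zero satisfies $y_n\ge\delta$. I will exploit a single cluster $\{t_n\}_{n=1}^N$ furnished by the hypothesis, with $N$ to be chosen enormous at the end, and I will keep track of the available \emph{budget} of phase growth over the long gap $(t_1,t_2)$. Since $t_1,t_2$ are consecutive points of $\sigma(\Theta)$ and $\theta$ increases by exactly $2\pi$ across any spectral gap, $\int_{t_1}^{t_2}\theta'\,dx=2\pi$. On the other hand each individual zero $z_n$ contributes a positive amount $\int_{t_1}^{t_2}\frac{y_n}{(x-x_n)^2+y_n^2}\,dx$ to this integral, so the sum of these contributions over \emph{any} subfamily of zeros is at most $2\pi$. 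I would apply this to the zeros lying over the tight cluster, i.e. those with $x_n\in[t_2,t_N]$, and set $p_n:=x_n-t_2\in[0,t_N-t_2]$. Because the gap length $G:=t_2-t_1$ exceeds $ND$ while $t_N-t_2<(N-2)D<G$, a direct estimate of the tail integral should give a lower bound of the form $\int_{t_1}^{t_2}\frac{y_n}{(x-x_n)^2+y_n^2}\,dx\gtrsim \min\!\big(1,\ y_n/p_n\big)\ge \min\!\big(1,\ \delta/p_n\big)$, the constant being absolute once $G\gg t_N-t_2$.

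Summing, the budget inequality becomes $\sum_{x_n\in[t_2,t_N]}\min(1,\delta/p_n)\lesssim 2\pi$. The heart of the matter is then to show that the left-hand side is forced to grow without bound in $N$, which contradicts the fixed budget as soon as $N$ is large. This is where the tightness of the cluster enters: since consecutive cluster points are within $D$, the interval $[t_2,t_2+r]$ contains at least $\sim r/D$ spectral points and hence $\theta$ grows by at least $\sim 2\pi r/D$ there, which should localize $\gtrsim r/D$ zeros within distance $O(r)$ of $t_2$; equivalently the counting function of $\{p_n\}$ is $\gtrsim r/D$ for $\delta\lesssim r\le t_N-t_2$. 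Feeding this into the sum yields $\sum \delta/p_n\gtrsim \frac{\delta}{D}\log\frac{t_N-t_2}{\delta}\gtrsim \frac{\delta}{D}\log N$, which exceeds $2\pi$ once $N>\exp(CD/\delta)$ — the desired contradiction, so a zero with $y_n<\delta$ must exist.

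I expect the main obstacle to be precisely this localization step. A priori the phase growth over $[t_2,t_2+r]$ could be supplied by zeros whose real parts lie far to the right (deeper in the cluster, or beyond $t_N$) or, in principle, to the left; ruling out that the required $\theta'$-mass is "borrowed" from distant zeros — so that genuinely $\gtrsim r/D$ zeros sit at distance $O(r)$ from $t_2$ — is the one estimate that needs real care. The separation of $\{s_n\}$ (which gives a matching upper bound $\lesssim r/D$ on the count, hence $t_N-t_2\asymp ND$ and a clean logarithm) together with the one-sidedness of the long gap are the structural facts I would lean on to carry it out.
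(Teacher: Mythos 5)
Your outline follows the same strategy as the paper's proof: assume for contradiction that all zeros satisfy $y_n\ge\delta$, use the fact that the phase increase across the long gap $(t_1,t_2)$ is a fixed budget, localize many zeros above the tight cluster with real parts growing like $t_2+nD$, and then note that their Poisson contributions to the gap sum to $\gtrsim \frac{1}{D}\sum_{n\le cN}\frac{1}{n}\asymp\frac{\log N}{D}$, overrunning the budget for large $N$. Your closing harmonic-sum computation is essentially the paper's final display.

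The problem is that the one step you defer --- the localization claim that $\gtrsim r/D$ zeros actually sit within distance $O(r)$ of $t_2$ --- is not a technical afterthought but the entire content of the paper's proof, and your proposal offers no argument for it beyond naming the structural facts you would ``lean on.'' The paper proves it with a specific geometric device: the boxes $S=(t_2,t_N)\times(0,\sqrt{ND})$, $T=(t_1,t_2)\times(0,\sqrt{ND})$, $\tilde S=(t_2,\tilde a)\times(0,\sqrt{ND})$, and the comparison estimate that any zero \emph{outside} $S\cup T$ has Poisson kernel on the cluster half $(t_2,\tilde a)$ bounded by $\kappa$ times its Poisson kernel on the mirror interval $(\tilde c,t_2)$ inside the gap, with $\kappa$ independent of $N$. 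Since the phase increase over $(\tilde c,t_2)$ is bounded by an absolute constant, distant zeros can supply only $O(\kappa)$ of phase over $(t_2,\tilde a)$, zeros in $T$ only $O(1)$, while the cluster forces total phase $\asymp N\pi$ there; hence the box $S$ must contain at least $N-\kappa-1$ zeros, $\tilde S$ at least $N/4$, and applying the same count to every initial subinterval $(t_2,u)$ gives the enumeration $x_n\le t_2+2(n-1)D$ that your final sum needs. This is exactly the ``borrowing from distant zeros'' issue you flag and leave open. A second, related defect: your claimed tail bound $\int_{t_1}^{t_2}\frac{y_n\,dx}{(x-x_n)^2+y_n^2}\gtrsim\min(1,\,y_n/p_n)$ is false for tall zeros, since for $y_n\gg t_2-t_1$ the integral is $\asymp (t_2-t_1)/y_n$, which can be arbitrarily small while $\min(1,y_n/p_n)=1$; so your budget inequality $\sum\min(1,\delta/p_n)\lesssim 2\pi$ does not follow as stated. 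The paper's truncation of all boxes at height $\sqrt{ND}$ is precisely what removes this obstruction --- the localization produces zeros of controlled height as well as controlled real part --- and nothing in your sketch plays that role. As written, the proposal reproduces the outer shell of the paper's argument but leaves its central estimate unproved.
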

   
     The above result can be used to prove that even in the \textit{regular} case, we are not assured of an MIF with bounded derivative. We describe this in the last section.

     Another way to characterize non-sparsity would be to have growing clusters that are sparser than any arithmetic progression, with common ratio approaching $1$, intertwining with a subsequence having non-comeasurable gaps.

Some other cases that we still don't know about is when the gaps are co-measurable, $\Delta_n \asymp \Delta_{n+1}$ and
\begin{enumerate}
 \item The gaps are very small, $\Delta_n \lesssim \frac{\ln|a_n|}{\ln\ln \Delta_n}
 $. 
 \item The gaps are '\textit{in between}'
 i.e., $\ln|a_n|\lesssim\Delta_n\lesssim \ln^2 |a_n|$.
 \item The sequence has clusters and gaps, i.e. $\Delta_n \not\gtrsim (\ln|a_n|)^{2}$  and $\Delta_n \not\lesssim \ln|a_n|$. 
 \end{enumerate}

\section{Proofs and details}
As mentioned before, we closely follow the proof of the result in \cite{BM3} to give a proof of lemma \ref{small} 

\begin{proof}   
     We use Krein's shift formula to create a meromorphic inner function. 
     Define $b_n$:=$\frac{a_n+a_{n+1}}{2}$ and let $E:=\bigcup_n (a_n,b_n)$ to define the function 
     \begin{eqnarray}
      \label{kreins} \frac{1}{\pi i} \log \frac{\Theta +1}{\Theta -1} = Ku + ic, \mbox{   } u:=1_E -\frac{1}{2}, c\in \mathbb R
     \end{eqnarray}
     Let $\mu_1$ and $\mu_{-1}$ be the corresponding Aleksandrov-Clark's measures defined by the Herglotz representation 
     \begin{eqnarray}
      \nonumber \frac{1+\Theta}{1-\Theta} = K\mu_1 + const., \hspace{0.5 in} \frac{1-\Theta}{1+\Theta} = K\mu_{-1} +const.
      \end{eqnarray}
      The measures $\mu_1$, $\mu_{-1}$ have the following form:
        \begin{eqnarray}
         \nonumber \mu_1 = \displaystyle\sum_{n=-\infty}^{\infty} \alpha_n \delta_{a_n}, \hspace{0.5 in} \mu_{-1}= \displaystyle\sum_{n=-\infty}^{\infty} \beta_n \delta_{b_n},\\
        \end{eqnarray}
         for some positive numbers $\alpha_n$, $\beta_n$. We claim that 
         \begin{eqnarray}
          \label{al} \alpha_n \lesssim \Delta_n\ln \Delta_n, \hspace{0.5 in} \beta_n \lesssim \Delta_n\ln \Delta_n.
         \end{eqnarray}
     Since
     \begin{eqnarray}
      \nonumber |\Theta'| \asymp |1-\Theta|^2 |(K\mu_1)'|, \hspace{0.5 in} |\Theta'| \asymp |1+\Theta|^2 |(K\mu_{-1})'|,
     \end{eqnarray}
     we have 
     \begin{eqnarray}
      \nonumber \Theta'(x) \asymp \min \left\{ \sum \frac{\alpha_n}{(x-a_n)^2}, \sum \frac{\beta_n}{(x-b_n)^2} \right\}, 
     \end{eqnarray}
      It follows that if $x \in(a_m, a_{m+1})$, then by (\ref{al}),
       \begin{eqnarray}
        \nonumber |\Theta'(x)| \lesssim \int_{|t-x|\ge \Delta_m} \frac{\ln |t-x| dt}{(x-t)^2} \lesssim \frac{\ln \Delta_m}{\Delta_m} \lesssim 1. 
       \end{eqnarray}
        We will prove the estimate for $\alpha_n$s. The proof for $\beta_n$s is similar. 
        \begin{eqnarray}
         \nonumber \alpha_n = Res_{a_n}\bigg(\sum \frac{\alpha_n}{x-a_n}\bigg) &=& Res_{a_n}(K\mu_1) \\
         \nonumber &=& Res_{a_n} \bigg(\frac{1+\Theta}{1-\Theta}\bigg)\\
         \nonumber &=& const. Res_{a_n} e^{Ku},
         \end{eqnarray}
         where $u$ is as defined in (\ref{kreins}).
     
     \begin{eqnarray}
      \nonumber e^{Ku} &=& \exp\left\{ \int_{b_{n-1}}^{b_n} \frac{u(t) dt}{t-z} \right\}  \exp \left\{ \int_{\mathbb R \setminus(b_{n-1}, b_n)} \frac{u(t)dt}{t - z} \right\}\\
      \nonumber  &=& \exp\left\{ \int_{b_{n-1}}^{b_n} \frac{u(t) dt}{t-z} \right\} \exp \left\{ \int_{\mathbb R \setminus(b_{n-1}, b_n)} \frac{u(t)dt}{t - z} \right\}\\
       \nonumber &=& \frac{\sqrt{(b_n-z)(b_{n-1}-z)}}{a_n -z}  \exp \left\{ \int_{\mathbb R \setminus(b_{n-1}, b_n)} \frac{u(t)dt}{t - z} \right\}
      \end{eqnarray}
      Thus,
      \beq Res_{a_n} e^{Ku} \asymp \Delta_n \exp \left\{ \int_{\mathbb R \setminus(b_{n-1}, b_n)} \frac{u(t)dt}{t - a_n} \right\} \eeq
    Thus, it remains to estimate $\exp  \left\{ \int_{\mathbb R \setminus(b_{n-1}, b_n)} \frac{u(t)dt}{t - a_n} \right\} $. This is done as follows. 
    
    For $j > n$,
     \beq
    \displaystyle\int_{a_j}^{a_{j+1}} \frac{u(t)dt}{t-a_n} &=& \ln \frac{b_j-a_n}{a_j - a_n} -\ln\frac{a_{j+1}-a_n}{b_j - a_n} \\
    &=& \ln \bigg( 1+ \frac{\Delta_j}{a_j-a_n}\bigg) - \ln \bigg( 1+ \frac{\Delta_j}{b_j -a_n} \bigg) \\
    &=& \frac{\Delta_j}{a_j - a_n} - \frac{\Delta_j}{b_j - a_n} + O\bigg( \frac{\Delta_j^2}{(a_j - a_n)^2}\bigg) = O\bigg( \frac{\Delta_j^2}{(a_j - a_n)^2}\bigg) 
    \eeq
    Since we are on the positive real line,  we can take logs. 
    
    We have, 
    \beq \frac{\Delta_j^2}{(a_j - a_n)^2}\lesssim \displaystyle\int_{a_j}^{a_{j+1}} \frac{\ln t}{(t-a_n)^2}dt. \eeq
    Thus,
    \beq
    \displaystyle\sum_{j=n+1}^{\infty} \frac{\Delta_j^2}{(a_j - a_n)^2} &\leq& \displaystyle\int_{b_n}^{\infty} \frac{\ln t dt}{(t-a_n)^2} \\
    &\leq&  \displaystyle\int_{b_n}^{\infty} \frac{\ln (t-a_n) dt}{(t-a_n)^2} + \displaystyle\int_{b_n}^{\infty} \frac{\ln a_n dt}{(t-a_n)^2}\\
    &\lesssim& \frac{\ln \Delta_n}{\Delta_n} + \frac{\ln a_n}{\Delta_n}\\
    &\lesssim& \ln\ln|\Delta_n|,
    \eeq
    using integration by parts in the second step. 
    Thus,
    \beq \bigg|\displaystyle\int_{b_n}^{\infty} \frac{u(t) dt}{ t - a_n} \bigg| \lesssim \ln\ln \Delta_n.\eeq
Hence, we have obtained the estimate     
    \beq \alpha_n \lesssim \Delta_n e^{\ln\ln|\Delta_n|} = \Delta_n \ln \Delta_n.\eeq
   
            \end{proof} 
            
Let us now indulge in a simple observation that will aid us in proving as well as  understanding the proofs of lemmas \ref{medium} and \ref{sparse}. We recall the construction of the inner function as described in the first section. Using Cauchy's estimate, it is easy to see that if there is a strip around the real axis on which the function is uniformly bounded, then the derivative on the real line is also uniformly bounded. In other words, if there are constants  $c,m>0$ such that for $|\Im z| < c$, $|\Theta(z) | <m$, then 
 \beq |\Theta'(x)| \leq \frac{1}{2 \pi}\displaystyle\int_{|z-x| = c} \frac{|\Theta(z)|}{|z - x|^2} dz \leq \frac{m}{c}. \eeq 
 We recall (\ref{antisymm}) and the relationship of $\Theta$ with the Cauchy transform (\ref{theta}) to formulate a sufficient condition :
 \begin{observation}
 \label{obs2}
 If there exist constansts $c,m>0$ such that for $0< \Im z < c$, we have $|K\mu(z) -1| >m $, then the MIF $\Theta := (K\mu -1)/(K\mu +1)$ is such that $|\Theta'|$ is uniformly bounded on $\mathbb R$.
\end{observation}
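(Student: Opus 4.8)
The plan is to reduce the statement to the Cauchy-estimate principle recorded in the paragraph immediately preceding it: a two-sided bound $|\Theta(z)|\le M$ on a symmetric strip $\{|\Im z|<c\}$ forces $|\Theta'|\le M/c$ on $\mathbb R$. The only thing that prevents me from invoking this at once is that $\Theta$, being meromorphic, has poles in $\mathbb C_-$ (the conjugates of its zeros), so it need not be bounded below the real axis. The entire role of the hypothesis on $K\mu$ will be to keep those poles away from $\mathbb R$.

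First I would translate the hypothesis into a statement about $\Theta$ itself. Inverting (\ref{theta}) gives $K\mu=(1+\Theta)/(1-\Theta)$, whence the algebraic identity
\beq K\mu - 1 = \frac{2\Theta}{1-\Theta}. \eeq
Since $\Theta$ is inner we have $|\Theta|\le 1$ on $\mathbb C_+$, so on the upper strip $\{0<\Im z<c\}$ the assumption $|K\mu-1|>m$ reads $m<\frac{2|\Theta|}{|1-\Theta|}\le\frac{2|\Theta|}{1-|\Theta|}$, which rearranges to
\beq |\Theta(z)|>\frac{m}{m+2}=:m'>0, \qquad 0<\Im z<c. \eeq
In other words, the hypothesis says precisely that $\Theta$ is bounded away from $0$ on the upper strip, equivalently that $\Theta$ has no zeros there.

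Next I would use the reflection symmetry to turn this lower bound above the axis into an upper bound below it. By (\ref{antisymm}), $\Theta(z)=1/\overline{\Theta(\bar z)}$, so for $-c<\Im z<0$ the conjugate $\bar z$ lies in the upper strip and therefore $|\Theta(z)|=1/|\Theta(\bar z)|<1/m'$. Moreover, because $\Theta$ has no zeros in the upper strip it has no poles in the lower strip, and it has none on $\mathbb R$; hence $\Theta$ is holomorphic on the full symmetric strip $\{|\Im z|<c\}$ and is bounded there by $M:=1/m'$ (using $|\Theta|\le 1$ above the axis). Feeding this $M$ and any radius $r<c$ into the Cauchy-estimate observation then yields $|\Theta'(x)|\le M/r$ uniformly in $x\in\mathbb R$, which is the desired conclusion.

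The one step carrying the real content is the passage from the one-sided hypothesis to a two-sided bound on a full strip; the rest is bookkeeping. The point to keep in mind is that $|K\mu-1|>m$ is not an arbitrary smallness condition but is exactly calibrated to forbid zeros of $\Theta$ near $\mathbb R$ from above, which by (\ref{antisymm}) is the same as removing the poles that would otherwise spoil the Cauchy estimate from below.
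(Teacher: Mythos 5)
Your proposal is correct and takes essentially the same route the paper intends: inverting (\ref{theta}) to turn the hypothesis $|K\mu-1|>m$ into the lower bound $|\Theta|>m/(m+2)$ on the upper strip, then using the antisymmetry (\ref{antisymm}) to obtain a pole-free, uniformly bounded extension of $\Theta$ to a symmetric strip, and finally applying the Cauchy estimate stated just before the observation. The paper leaves these steps implicit, and your write-up simply fills in the details of that same argument.
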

Conversely, however, it is only required that there be a zero free strip for $\Theta$ about the real axis. In order to prove lemma \ref{medium}, we need the following result.
 \begin{lemma}\label{lem} 
 If $\Delta_{n+1}$ $\asymp$ $\Delta_n$ then choosing $w_n$ := $\Delta_n$ $\forall n$,  we have 
 \begin{eqnarray*} 
 \label{case1} \bigg|\displaystyle\sum_{n \neq k}  \bigg(\frac{w_n}{a_n-a_k}  - \frac{w_n a_n}{1+a_n^2}\bigg)\bigg|  \lesssim \ln |a_k|.
 \end{eqnarray*}
 \end{lemma}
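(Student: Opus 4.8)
The plan is to work with the two terms combined rather than separately, since $\sum_{n\neq k} w_n/(a_n-a_k)$ and $\sum_{n\neq k} w_n a_n/(1+a_n^2)$ each diverge logarithmically on their own and only their difference is controlled. First I would record the elementary identity
\[
\frac{w_n}{a_n-a_k}-\frac{w_n a_n}{1+a_n^2}=w_n\,\frac{1+a_k a_n}{(a_n-a_k)(1+a_n^2)},
\]
and extract two consequences of the hypothesis $\Delta_{n+1}\asymp\Delta_n$: the gaps can grow at most geometrically, so $a_{n+1}\asymp a_n$ for large $|n|$ (whence $\Delta_n\lesssim a_n$), and, since $w_n=\Delta_n\asymp a_{n+1}-a_n$ acts as a Riemann-sum width, any sum $\sum_n\Delta_n g(a_n)$ with $g$ slowly varying may be compared with $\int g$. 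Because the kernel and $\ln|a_k|$ are invariant under $a\mapsto -a$, I would assume $a_k>0$, and absorb the boundedly many small-index terms (with $a_n$ bounded, finitely many by separation) into an $O(1)$ error.

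Next I would partition the index set by the size of $|a_n|$ relative to $|a_k|$ and estimate each piece. In the far region $|a_n|>2|a_k|$ the kernel is $O\!\big(|a_k|/a_n^2\big)$, so the contribution is $\lesssim|a_k|\sum_{|a_n|>2|a_k|}\Delta_n/a_n^2\asymp|a_k|\int_{2|a_k|}^{\infty}t^{-2}\,dt=O(1)$. In the small region $|a_n|<|a_k|/2$ one has $a_n-a_k\asymp-a_k$; the part from $1/(a_n-a_k)$ is $\lesssim|a_k|^{-1}\sum\Delta_n=O(1)$, while $\sum\Delta_n\,|a_n|/(1+a_n^2)\lesssim\sum\Delta_n/|a_n|\lesssim\int_1^{|a_k|}t^{-1}\,dt\asymp\ln|a_k|$. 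In the moderate region $|a_k|/2\le|a_n|\le 2|a_k|$, the points near $-a_k$ lie at distance $\asymp a_k$ from $a_k$ and contribute $O(1)$ exactly as above, and the $\Delta_n\,a_n/(1+a_n^2)$ part over the whole band is $\lesssim a_k^{-1}\sum\Delta_n=O(1)$ since the band has length $\lesssim a_k$.

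The heart of the matter, and the step I expect to be the main obstacle, is the singular part of the moderate region, $\sum_{n\neq k,\ a_n\in[a_k/2,\,2a_k]}\Delta_n/(a_n-a_k)$, which is where the logarithm is produced. Here I would invoke comeasurability in the sharp form $a_n-a_k\ge\Delta_{n-1}\asymp\Delta_n$ for $n\ge k+1$ (and symmetrically for $n\le k-1$), giving $a_{n+1}-a_k\asymp a_n-a_k$ and thereby legitimizing the comparison $\Delta_n/(a_n-a_k)\asymp\int_{a_n}^{a_{n+1}}dt/(t-a_k)$ at every neighbour index, even though the band may contain many points with widely differing gap sizes. Summing yields $O(1)+\int_{a_{k+1}}^{2a_k}dt/(t-a_k)=O(1)+\ln\!\big(a_k/\Delta_k\big)$, and separation $\Delta_k\ge\delta$ converts this into $\lesssim\ln|a_k|$. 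Combining the four regional bounds gives the claim. The delicate points to handle carefully are the uniformity of the implied constants in $a_{n+1}\asymp a_n$ and in the Riemann-sum comparisons, and the treatment of the omitted term $n=k$ together with the immediate neighbours $a_{k\pm1}$, where the integral comparison degenerates and must be absorbed into the $O(1)$.
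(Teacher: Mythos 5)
Your proposal is correct, and it reaches the bound by a genuinely different route than the paper. The paper never forms the combined kernel in this lemma: it keeps the two terms separate and, for $n>k$, compares each with an integral over $(a_{n-1},a_n)$, namely $\frac{w_n}{a_n-a_k}\lesssim\frac{\Delta_{n-1}}{a_n-a_k}\leq\int_{a_{n-1}}^{a_n}\frac{dt}{t-a_k}$ and $\frac{w_na_n}{1+a_n^2}\geq\int_{a_{n-1}}^{a_n}\frac{t\,dt}{1+t^2}$ (both legitimized by $w_n=\Delta_n\asymp\Delta_{n-1}$), so that the whole tail is compared at once with the single convergent integral $\int_{a_k+\epsilon}^{\infty}\left(\frac{1}{t-a_k}-\frac{t}{1+t^2}\right)dt$, whose explicit antiderivative $\ln|t-a_k|-\frac{1}{2}\ln(1+t^2)$ yields $\asymp\ln|a_k|$ in one stroke; the sum over $n<k$ is declared identical. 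Thus in the paper the cancellation of the two individually divergent sums happens inside one integral, whereas you cancel algebraically first --- your identity $w_n\frac{1+a_ka_n}{(a_n-a_k)(1+a_n^2)}$ is exactly the rearrangement the paper uses later, in the proof of Lemma \ref{singletons}, not here --- and then run a far/small/moderate/singular region decomposition. The paper's route buys brevity and pins the logarithm to one explicit primitive; your route buys care about absolute values, which is precisely where the paper is loose: its displayed chain combines a $\lesssim$ for one term with a $\geq$ for the other, which by itself only bounds the difference from above, and the matching comparison from the other side (shifting to $(a_n,a_{n+1})$, again by comeasurability) needed to control the modulus is left tacit. Your version also localizes where the logarithm actually arises (the band $|a_n|\leq|a_k|/2$ via $\sum\Delta_n/|a_n|$, plus $\ln(a_k/\Delta_k)$ from the singular band) and makes explicit the role of separation $\Delta_k\geq\delta$, which the paper buries in its choice $\epsilon>1/2$; the degenerate neighbour terms $n=k\pm1$ that you flag are indeed $O(1)$ by comeasurability, so your planned absorption goes through.
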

 Let us see the effect of this result in the situation when the gaps in the sequence are at least logarithmically increasing.
 
    \begin{proof}(of lemma \ref{medium})
    Let $C$ be a constant such that $\Delta_n$ $\geq C \ln^2|a_n|$.
    Let us choose and fix a $\delta << C$.
    We will separate the real line into two disjoint sets:
\begin{enumerate}
\item $x \in (\frac{a_{k-1}+a_k}{2},\frac{a_k+a_{k+1}}{2} )$ and $|x-a_k|\geq \frac{\delta}{2}\frac{\Delta_k}{\ln|a_k|}$, \\
\item $x \in (\frac{a_{k-1}+a_k}{2},\frac{a_k+a_{k+1}}{2} )$ and $|x-a_k| < \frac{\delta}{2}\frac{\Delta_k}{\ln|a_k|}$.
\end{enumerate}
\textbf{Case 1}  
We take derivatives in (\ref{theta}) to obtain the estimate 
\beq |\Theta'(z)| \leq |1-\Theta|^2 \sum \frac{w_n}{|z-a_n|^2}. \eeq 
For any $x \in (\frac{a_{k-1}+a_k}{2},\frac{a_k+a_{k+1}}{2} )$ and $|x-a_k|\geq \frac{\delta}{2}\frac{\Delta_k}{\ln|a_k|}$, \begin{equation} \label{est1} |\Theta'(x)|\asymp \frac{w_k}{(x-a_k)^2} \leq  \frac{\Delta_k}{\delta^2\Delta_k^2/4(\ln|a_k|)^{2}} \leq \frac{4(\ln^2|a_k|)}{\delta^2\Delta_k} \lesssim 1. \end{equation}
\newline
\textbf{Case 2} We first notice that for 
$z \in D(b, r)$, where
\begin{equation*} b= \frac{1}{2}\bigg( \frac{a_k + a_{k+1}}{2}+\frac{a_{k-1} +a_k}{2}\bigg) \hspace{0.1 in}  \mbox{and}  \hspace{0.1 in}r= \bigg( \frac{ a_{k+1}-a_{k-1}}{4}\bigg),\end{equation*} we have that 
\begin{equation*}|K\mu(z) - K\mu(a_k)| \asymp 1.\end{equation*}
For, \begin{eqnarray*}
\bigg|\displaystyle\sum_{n\neq k} \bigg(\frac{w_n}{a_n -a_k} - \frac{w_n}{a_n -z}\bigg)\bigg| \leq \bigg|\displaystyle\sum_{n\neq k} 
\frac{w_n(a_k - z)}{(a_n -a_k)(a_n-z)}\bigg| &\lesssim& \sum_{n\neq k} \frac{\Delta_n \Delta_k}{|(a_n-a_k) (a_n-z)|} \\
&\asymp&\sum_{n \notin \{k-1,k,k+1\}}\frac{\Delta_n\Delta_k}{(a_n-a_k)^2} \\
&\asymp& \Delta_k \int_{\mathbb R \setminus (a_{k-1},a_{k+1})} \frac{dt}{(t-a_k)^2}\\
&\asymp& 1
\end{eqnarray*}

Let $x \in (\frac{a_{k-1}+a_k}{2},\frac{a_k+a_{k+1}}{2} )$ and $|x-a_k| \leq  \frac{\delta\Delta_k}{2\ln|a_k|}$, then for any $z \in D\bigg(x,\frac{\delta\Delta_k}{2(\ln|a_k|)}\bigg)$
\begin{eqnarray*}
|K\mu(z)| &\geq& \bigg|\frac{w_k}{a_k - z} - \frac{w_k a_k}{1+a_k^2} \bigg| -  \bigg|\displaystyle\sum_{i \neq k}  \frac{w_i}{a_i-z}  - \frac{w_i a_i}{1+a_i^2}\bigg| \\
&\geq& \bigg| \frac{\Delta_k}{\delta\Delta_k/2\ln |a_k|}  - \frac{\Delta_k}{a_k}\bigg| - C'\ln |a_k|\\ &\geq& \frac{\ln|a_k|}{2\delta} - C'\ln|a_k| + O(1).
\end{eqnarray*}
where $C'$ is such that $\bigg|\displaystyle\sum_{i \neq k}  \frac{w_i}{a_i-z}  - \frac{w_i a_i}{1+a_i^2}\bigg| \leq C' \ln|a_k|$, by lemma \ref{lem}. Thus, by choosing a sufficiently small $\delta$, we have that $K\mu$ is bounded away from $1$. We notice that $\delta$ is independent of $k$.
Thus $K\mu$ is large on disks centred at points \textit{close to} the $a_n$s. We recall obsertaion \ref{obs2} which stated that it is sufficient to have a strip above the real line on which $|K\mu|$ is bounded away from $1$. Here, we obtain a slightly weaker configuration - we have disks with centres at $a_k$ and radii $\frac{\delta\Delta_k}{2\ln|a_k|} \gtrsim 1$ such that at each point $z$ in the disk, $|K\mu|$ is bounded away from $1$. Thus, $|\Theta'(x)|$ is bounded for $x \in (\frac{a_{k-1}+a_k}{2},\frac{a_k+a_{k+1}}{2} )$ and $|x-a_k| \leq \frac{\Delta_k}{2\ln|a_k|}$.
Cases $1$ and $2$ together give us that $|\Theta'|$ is bounded on $\mathbb R$.

        \end{proof}

    Let's now prove Lemma (\ref{lem})

 \begin{proof}
  The proof is essentially computation of integrals. The underlying idea is that when $\Delta_{n+1}$ $\asymp$ $\Delta_n$ , the singular measure $\mu$, now with weight at $w_n$ equal to the gap $\Delta_n$ at $a_n$, behaves like the Lebesgue measure. Explicitly, we look at the following calculations. Let $n>k$, then 
  \begin{eqnarray}
   \nonumber \frac{w_n}{a_n - a_k} \lesssim \frac{\Delta_{n-1}}{a_n-a_k} \leq  \int_{a_{n-1}}^{a_n} \frac{dt}{t-a_k} \hspace{1 cm}\mbox{and} \hspace{1 cm} \frac{w_n a_n}{1+a_n^2} \geq \displaystyle\int_{a_{n-1}}^{a_n} \frac{t dt}{1+t^2}.
   \end{eqnarray}
  
 Thus,
 \beq
  \label{r1} 0\leq \bigg|\displaystyle \sum_{n=k+1}^{\infty} \bigg( \frac{w_n}{a_n-a_k} -\frac{w_n a_n}{1+a_n^2}\bigg) \bigg | \lesssim  \displaystyle\int_{a_k + \epsilon}^{\infty} \bigg(\frac{1}{t -a_k} - \frac{t}{1+t^2} \bigg) dt = \bigg(\ln|t-a_k| - 1/2\ln|1+t^2|\bigg)\bigg|_{a_k+\epsilon}^{\infty} \asymp \ln |a_k|,
  \eeq
  where $\epsilon$ is just some arbitrary positive number that is, say $> 1/2$.

 Identical calculations exist for the sum $\displaystyle\sum_{n < k } \bigg(\frac{w_n}{a_n-a_k}  - \frac{w_n a_n}{1+a_n^2}\bigg)$.
 
 \end{proof}

We now prove the following result leading to the proof of lemma \ref{sparse}. This lemma considers sparse singletons, which we will  generalize to sparse clusters.
 
 \begin{lemma} \label{singletons}
 Let $\left\{a_n\right\}$ be a sequence on $\mathbb R$ such that,
  $1-  \frac{a_{k}}{a_{k+1}}  $ $> d> 0$ $\forall k\geq 0$ and $ 1- \frac{a_k}{a_{k-1}}>d>0$ $\forall k <0$, where $d$ is independent of $k$. Then,
there is a meromorphic inner function on $\mathbb C_+$ with spectrum $\{a_n\}$ whose derivative is uniformly bounded in $\mathbb R$. 
   \end{lemma}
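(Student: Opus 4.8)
The plan is to invoke Observation \ref{obs2}, so that the whole problem reduces to producing constants $c,m>0$ with $|K\mu(z)-1|\ge m$ throughout the strip $0<\Im z<c$. In this sparse regime I would take the simplest admissible weights, $w_n\equiv 1$. Poisson–finiteness $\sum_n (1+a_n^2)^{-1}<\infty$ is then automatic, because the hypothesis $1-a_k/a_{k+1}>d$ (and its mirror for $k<0$) forces $|a_n|$ to grow at least geometrically, $|a_{n+1}|\ge |a_n|/(1-d)$; the same growth guarantees a smallest gap and a point nearest the origin, both bounded below by fixed positive constants. The spectrum of the $\Theta$ defined by (\ref{theta}) is exactly $\{a_n\}$, since $\Theta=1$ precisely at the poles of $K\mu$.

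The core of the proof is one uniform interaction estimate. Given $z=x+iy$ in the strip, let $a_k$ be the point of the sequence closest to $x$, and write $K\mu(z)=T(z)+R(z)$, where $T(z):=\frac{1}{\pi i}\,\frac{w_k}{a_k-z}$ is the single nearest pole and $R(z)$ gathers every other pole together with all the $t/(1+t^2)$ normalising terms. I claim $|R(z)|\le M$ for a constant $M$ independent of $k$ and of $z$. This is where the geometric separation does the work: the tails $\sum_{n\ne k}|a_n-a_k|^{-1}$ and $\sum_n |a_n|^{-1}$ are geometric and hence bounded by a fixed constant — much as in the computation behind Lemma \ref{lem}, though here sparsity yields an absolute constant rather than a logarithmic bound — and $\sum_n |a_n|/(1+a_n^2)$ converges for the same reason. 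Establishing this uniform bound is the main obstacle.

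With the splitting in hand I would run a two–region argument inside the strip. On $\{|z-a_k|\le\rho\}$, a direct factorisation gives $T(z)-1=(p_k-z)/(z-a_k)$ with $p_k=a_k+i w_k/\pi$ the unique zero of $T-1$; since $p_k$ sits at height $w_k/\pi\asymp 1$, choosing $c<w_k/(2\pi)$ keeps $z$ at distance $\ge c_2>0$ from $p_k$, whence $|T(z)-1|\ge c_2/\rho$ and $|K\mu(z)-1|\ge c_2/\rho-M$. Taking $\rho$ small (but below half the minimal gap) makes this at least $1$. On the complementary region, where $\mathrm{dist}(x,\{a_n\})\ge\rho$, I would instead bound the real part: $\Re K\mu(z)=\frac{y}{\pi}\sum_n \frac{w_n}{(a_n-x)^2+y^2}\le \frac{c}{\pi}\,\big(\rho^{-2}+C\big)$, using that the nearest term is $\le w_k\rho^{-2}$ and the remainder is a bounded geometric sum; shrinking $c$ forces $\Re K\mu<\tfrac12$, hence $|K\mu-1|\ge\tfrac12$. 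Setting $m=\tfrac12$ and $c$ the smaller of the two thresholds gives $|K\mu-1|\ge m$ on the whole strip, and Observation \ref{obs2} finishes the proof.

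Beyond the interaction bound, the delicate point is the region adjacent to $a_k$: because $T$ is complex-valued one cannot argue through $\Re K\mu$ alone, and it is essential that $T-1$ has its only zero at the fixed height $w_k/\pi$ above the real axis, safely outside a thin strip. This feature is also what I expect to carry over to Lemma \ref{sparse}: there the single pole $T$ is replaced by the aggregate contribution of one finite cluster, whose combined zero again sits at height $\asymp 1$ when the weights are taken $\asymp 1$, while the between-cluster interactions remain geometrically small.
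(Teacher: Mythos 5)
Your proposal is correct in substance, but it takes a genuinely different route from the paper. The paper's own proof never leaves the real axis: choosing $w_n\equiv 1$, it verifies the purely discrete condition $\sup_k\big|\sum_{n\neq k}\big(\tfrac{w_n}{a_n-a_k}-\tfrac{a_nw_n}{1+a_n^2}\big)\big|<\infty$ --- splitting the sum into $S_1+S_2$ and using the separation estimate $|a_k/a_n-1|>D:=\min\{d,d/(1-d)\}$ together with $\sum_n 1/|a_n|<\infty$ --- and then invokes Lemma 5.2 of \cite{ABAR} as a black box to conclude that $|\Theta'|$ is bounded. You instead argue through the strip criterion, Observation \ref{obs2}: you split $K\mu=T+R$ around the nearest pole, use the factorization $T-1=(p_k-z)/(z-a_k)$ with the zero $p_k=a_k+i/\pi$ pinned at height $1/\pi$, and bound $\Re K\mu<\tfrac12$ away from the spectrum. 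Your key interaction bound $|R|\le M$ is essentially the paper's $S_1+S_2$ computation extended from the points $a_k$ to disks around them (your justification of it is terse but the geometric growth does make all the relevant sums uniformly bounded), and structurally your argument mirrors the paper's proof of Lemma \ref{medium} (Case 2, where $|K\mu|$ is bounded away from $1$ on disks via Lemma \ref{lem}) rather than its proof of this lemma. What each approach buys: the paper's route is shorter, since Baranov's lemma only demands estimates at the real points $a_k$ themselves; yours is self-contained within the paper (Observation \ref{obs2} is proved there by a Cauchy estimate and the reflection (\ref{antisymm})), and it yields extra geometric information --- a genuine zero-free strip, with the solutions of $K\mu=1$ at height $\asymp 1$ --- which, as you note, transfers directly to the cluster setting of Lemma \ref{sparse}. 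Two small repairs are needed but are routine: your two regions, $\{|z-a_k|\le\rho\}$ and $\{\operatorname{dist}(x,\{a_n\})\ge\rho\}$, do not literally cover the strip (take the first region to be $|x-a_k|\le\rho$ and choose $c\le\rho$, so $|z-a_k|\le 2\rho$ there); and the minimal gap and the distance of the sequence to the origin are bounded below only by constants depending on the sequence (consider $a_n=\epsilon\,2^{n}$), not by absolute constants --- which is all your argument actually requires, since uniformity is needed in $k$ and $z$, not over all admissible sequences.
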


   \begin{proof}
   We will use lemma 5.2 in \cite{ABAR} to prove this result.
   First note that the ratio test for convergence of a series gives us that $\displaystyle \sum_{n=-\infty}^{\infty} \frac{1}{|a_n|} < \infty$. We also notice that for all $n \neq k$, 
   \begin{eqnarray}
    \label{e1} \bigg| \frac{a_k}{a_n} -1 \bigg| > \min \left\{d, \frac{d}{1-d}\right\} =: D .  
   \end{eqnarray}
   For, if $n >k$, then $1-\frac{a_k}{a_n} \geq 1-\frac{a_k}{a_{k+1}} > d$ and for $n<k$, $\frac{a_k}{a_n} -1 > \frac{d}{1-d}$. We notice that this also tells us that $\frac{a_k}{\Delta_k} <\frac{1}{D}$ for all $k$. Let us choose the weights $w_n = 1$. 
 
 We rearrange terms,
   \begin{eqnarray*}
    \nonumber \displaystyle\sum_{n \neq k} \bigg(\frac{w_n}{a_n -a_k} - \frac{a_n w_n}{1+a_n^2} \bigg)
     &=&\displaystyle\sum_{n \neq k} \frac{1 +  a_n^2 -  a_n^2 + a_n a_k}{(a_n-a_k)(1+a_n^2)}\\
    \nonumber &=&\displaystyle\sum_{n \neq k} \frac{1} {(a_n-a_k)(1+a_n^2)}+\displaystyle\sum_{n \neq k} \frac{ a_n a_k}{(a_k-a_n)(1+a_n^2)}\\
    \label{s1s2} &=&S_1 + S_2.
   \end{eqnarray*}
   Then,
   \begin{eqnarray*}
    \nonumber |S_1| \leq \displaystyle\sum_{n \neq k} \bigg|\frac{1} {(a_k-a_n)(1+a_n^2)}\bigg| \lesssim \displaystyle\sum_{n \neq k} \bigg|\frac{1}{a_n^2}\bigg| < \infty \\   \end{eqnarray*}
   and 
   \begin{eqnarray*}
   \nonumber |S_2| \leq \displaystyle\sum_{n \neq k} \bigg| \frac{a_na_k}{(a_k -a_n) (1+ a_n^2)} \bigg|  \leq \displaystyle\sum_{n \neq k} \left|\frac{a_k}{\Delta_k}\frac{a_n}{1+a_n^2}\right| \leq \frac{1}{D} \displaystyle\sum_{n \neq k} \bigg| \frac{1}{a_n} \bigg| < \infty. 
   \end{eqnarray*}
   Thus, we have that 
   \begin{equation*}
       \nonumber \sup_{n}\bigg|\displaystyle\sum_{n \neq k} \bigg(\frac{w_n}{a_n -a_k} - \frac{a_n w_n}{1+a_n^2}\bigg)\bigg| <\infty.
      \end{equation*}
  Hence, by lemma 5.2 in \cite{ABAR}, the corresponding MIF, defined by \ref{theta} has uniformly bounded derivative on $\mathbb R$.
  \end{proof}
      
      The hypothesis of the above lemma characterizes sequences which are sparse, i.e., at least geometrically increasing with common ratio strictly bigger than 1. Thus, gaps that grow rapidly (but are still finite) do indeed have the required inner function. Notice that we could make this result stronger by allowing sequences that, instead of singletons, have finite bunches that are sparsely distributed. For, each bunch would contribute a (uniformly) bounded weight to the existing sum. We prove lemma \ref{sparse}.
       
   \begin{proof}(of lemma \ref{sparse})
   Suppose we choose one point from each cluster and call  it $a_{m_0}^{j_0}$, then by the proof of the previous lemma,   
   \begin{equation} \nonumber \displaystyle\sum_{n \neq m_0} \bigg(\frac{w_n^j}{a_n^j -a_{m_0}^{j_0}} - \frac{a_n^j w_n^j}{1+(a_n^j)^2}\bigg) = \displaystyle\sum_{n \neq m_0} \bigg(\frac{1}{a_n^j -a_{m_0}^{j_0}} - \frac{a_n^j}{1+(a_n^j)^2}\bigg) < B,\end{equation} where $B$ is a bound, independent of $k$. 
   Consider a point $a_{n_0}^{j_0}$ and let the '*' in the sum denote summation over all points except $a_{m_0}^{j_0}$
    \begin{eqnarray*} \nonumber \bigg|\displaystyle\sum_* \frac{1}{a_n^j-a_{m_0}^{j_0}}  - \frac{a_n^j}{1+(a_n^{j})^2}\bigg| &=& 
      \bigg|\displaystyle\sum_{j\neq j_0} \frac{1}{a_{m_0}^j-a_{m_0}^{j_0} } - \frac{a_{m_0}^j}{1+(a_{m_0}^{j})^2} \bigg|+\bigg| \displaystyle\sum_{n \neq i_0, 1\leq j \leq n_m} \frac{1}{a_n^j-a_{m_0}^{j_0}}  - \frac{a_n^j}{1+(a_n^{j})^2}\bigg|\\ 
      &\leq& S + N\bigg|\displaystyle\sum_{n \neq k} \frac{1}{a_n -a_k} - \frac{a_n }{1+a_n^2}\bigg| \leq S+NB,
     \end{eqnarray*}    
    where $S$ and $B$ are constants. The maximum size of each cluster $N$ assures that $S$ is independent of $m_0$ and $n_0$ and we know from the previous lemma that $B$ is independent of $m_0$ and $n_0$.
            
         \end{proof}
          
We now proceed to the last part of our discussion. Before we begin our proof of proposition \ref{counterexample}, let us elucidate some notations. Let us enumerate the zeroes $z_n(=x_n + iy_n)$ of $\Theta$ and let  $\Theta(x)=e^{i\phi(x)}$ on $\mathbb R$. Let us pick and fix a large $N$  and let $\{t_i\}$ be a set of points on $\mathbb R$ as described in the statement of the lemma. Let $S$ be the box $(t_2,t_N) \times (0,\sqrt{ND})$ and $T$ the box $(t_1,t_2) \times (0,\sqrt{ND})$. Let $\tilde a$ be the mid point of the interval $(t_2,t_N)$ and let $\te$ be the box $(t_2,\tilde a) \times (0,\sqrt{ND})$. On the adjacent interval, let $\tilde c$ be the point in $(t_1,t_2)$ such that $\tilde{a}-t_2 = t_2 -\tilde{c}$. Since $z_n$ form the zeroes of the Blaschke product of $\Theta$, we can write
\begin{equation*}
\phi'(x) = \sum_n\frac{y_n}{(x-x_n)^2 + y_n^2}.
\end{equation*}
\begin{proof}
Suppose that the zeros are bounded away from the real line, i.e., there is a $\delta > 0$ such that $y_n \delta$ for all all the zeros $z_n = x_n + iy_n$ of $\Theta$. Without loss of generality, let $\delta = 1$. We have that for $t \in (\tilde c,t_2)$ and $s\in (t_2,\tilde{a})$, 
\begin{equation*} \label{ineq}
\sum_{z_n \notin S \cup T} \frac{y_n}{(s-x_n)^2+y_n^2} \leq \kappa \sum_{z_n \notin S \cup T}\frac{y_n}{(t-x_n)^2+y_n^2},
\end{equation*}
where $ \kappa >0$ is a constant independent of $N$.

Then, \begin{equation*}
\label{outside}\displaystyle\int_{t_2}^{\tilde a}\sum_{z_n \notin S \cup T} \frac{y_n}{(s-x_n)^2+y_n^2}dt \leq \kappa \displaystyle\int_{\tilde c}^{t_2}\sum_{z_n \notin S \cup T} \frac{y_n}{(t-x_n)^2+y_n^2}dt \leq \kappa \pi, \end{equation*}
and the zeros in the box $T=(t_1,t_2) \times (0, \sqrt{ND})$ induce the following inequality \begin{equation*} \label{t} \displaystyle\int_{t_2}^{\tilde a}\sum_{z_n \in  T} \frac{y_n}{(s-x_n)^2+y_n^2}ds \leq  \displaystyle\int_{\tilde c}^{t_2}\sum_{z_n \in  T} \frac{y_n}{(t-x_n)^2+y_n^2}dt \leq \pi . \end{equation*}
Let $Z$ be the number of zeros $\{z_n\}$ in the box $S$. Then,
\begin{equation*}
 \displaystyle\int_{t_2}^{\tilde{a}} \displaystyle\sum_{z_n \in S} \frac{y_n}{(s-x_n)^2 + y_n^2} \leq Z\pi. \end{equation*}
Then,
\begin{eqnarray*}
 \displaystyle\int_{t_2}^{\tilde{a}} \displaystyle\sum_{z_n \in S} \frac{y_n dt}{(s-x_n)^2 + y_n^2} &=& \displaystyle\int_{t_2}^{\tilde{a}}\bigg( \displaystyle\sum_{n \in \mathbb Z} -\sum_{ (S \cup T)^c}  - \displaystyle\sum_{T} \bigg) \frac{y_n}{(s-x_n)^2 + y_n^2}dt\\
 &>& N\pi  - \kappa \pi -\pi. \end{eqnarray*}
Thus,
\begin{eqnarray*}
\bigg(N - \kappa -1 \bigg)\pi \leq   Z\pi. 
\end{eqnarray*}
This gives us that 
\begin{equation*}
Z\geq N - \kappa -1.
\end{equation*}
Thus, for a large enough $N$, $Z \geq  N/2$. In a similar vein it can be proved that the box $\te$ contains at least $N/4$ zeroes. And for any subinterval of the form $(t_2,u)$, containing $n_u$ points from $\sigma(\Theta)$, the box $(t_2,u)\times (0, \sqrt{ND})$ contains at least $n_u/2$ zeros. We know that $t_n \leq t_2 + (n-1)D$ for $n\geq2$. Thus, enumerating the zeros inside $\tilde S$, we have $x_n \leq t_2 + 2(n-1)D$.

\noindent Thus,
\begin{eqnarray*}
       \pi>\displaystyle\int_{\tilde c}^{t_2} \phi'(t)dt \geq \displaystyle\int_{\tilde c}^{t_2} \displaystyle\sum_{\te} \frac{y_n }{(t-x_n)^2 + y_n^2}dt  
        \geq  \displaystyle\int_{\tilde c}^{t_2} \displaystyle\sum_{n=1}^{N/4} \frac{y_n}{(t-(t_2+nD) )^2+ y_n^2} dt
        &\gtrsim& \sum_{n=1}^{N/4} \displaystyle\int_{\tilde c}^{t_2} \displaystyle \frac{1}{(t-(t_2+nD))^2+ 1}dt \\
        &=& \sum_{n=1}^{N/4}\arctan(Dn) - \arctan(Dn+t_2-\tilde{c}) \\
        &=& \sum_{n=1}^{N/4}  \arctan\bigg(\frac{t_2 - \tilde{c}}{(Dn+t_2 -\tilde{c})Dn}\bigg) \\
        &\geq& \sum_{n=1}^{N/4} \arctan\bigg(\frac{1}{(2Dn/N^2+1)Dn}\bigg)\\
        &\gtrsim& \frac{1}{D}\sum_{n=1}^{N/4} \frac{1}{n},
        \end{eqnarray*}
which diverges as $N\rightarrow\infty$, which is a contradiction.
\end{proof}

Baranov remarks in \cite{BAR} that since the placement of 'other points' does not affect the calculations above, we can make such clusters and  gaps along a very rare subsequence of $\mathbb N$, without affecting the regularity. For example, let us consider the following sequence $\Lambda= \mathbb N \setminus A$, where $A = \{2^{n_k}+m\}$ for $m=1,2,...,k$, where $n_k$ is a rare subsequence of $\mathbb N$, say the sequence $n_k= 3^k$. Then, we have gaps of length $k$, which is unbounded, followed by clusters with gaps of size $1$, the size of the clusters $\geq 2^{n_k+1}$. This sequence is \textit{a-regular}, where $a=1$. For,
\begin{equation*}
\int_{\mathbb R}\frac{|n_{\Lambda}(x) - x|}{1+x^2}dx \asymp \sum_k \frac{k}{1+(2^{n_k})^2} < \infty.
\end{equation*}
Thus, even for regular sequences, there may not exist any MIF with bounded derivative.

  \newpage

           \end{document}